\newcommand{\N}{\mathbb{N}}
\newcommand{\Z}{\mathbb{Z}}
\newcommand{\Q}{\mathbb{Q}}
\newcommand{\R}{\mathbb{R}}
\newcommand{\PR}{\mathbb{P}}
\newcommand{\ESP}{\mathbb{E}}
\newcommand{\Ma}[1]{\mathcal{M}_{\alpha} \left( { #1} \right)}
\def\A{{\cal A}}
\def\ce{{\cal C}}
\def\I{{\cal I}}
\def\Fa{\mathcal{F}_{\alpha}}
\def\o{\omega}
\def\O{\Omega}
\def\al{\alpha}
\def\ga{\gamma}
\def\ind{\mathds{1}}
\def\undal{\underline{\alpha}}
\def\oval{\overline{\alpha}}
\def\vep{\varepsilon}
\numberwithin{equation}{section}
\newtheorem{thm}{Theorem}[section]
\newtheorem{lemma}{Lemma}[section]
\newtheorem{definition}{Definition}[section]
\newtheorem{rem}{\textbf{Remark}}[section]
\title{Wavelet series representation for multifractional multistable Riemann-Liouville process}
\author{Antoine Ayache \\
Univ. Lille, CNRS, UMR 8524 - Laboratoire Paul Painlevé,\\ F-59000 Lille, France.\\
E-mail: \texttt{Antoine.Ayache@univ-lille.fr}\\
\and
Julien Hamonier \\
Univ. Lille, CHU Lille, ULR 2694 - METRICS : \\
Évaluation des technologies de santé et des
pratiques médicales, \\
F-59000 Lille, France.
\\
E-mail: \texttt{Julien.Hamonier@univ-lille.fr}
 }
\date{\today}
\begin{document}

\maketitle

\begin{abstract}
The main goal of this paper is to construct a wavelet-type random series representation for a random field $X$, defined by a multistable stochastic integral, which generates a multifractional multistable Riemann-Liouville (mmRL) process $Y$. Such a representation provides, among other things, an efficient method of simulation of paths of $Y$. In order to obtain it, we expand in the Haar basis the integrand associated with $X$ and we use some fundamental properties of multistable stochastic integrals. Then, thanks to the Abel's summation rule and the Doob's maximal inequality for discrete submartingales, we show that this wavelet-type random series representation of $X$ is convergent in a strong sense: almost surely in some spaces of continuous functions. Also, we determine an estimate of its almost sure rate of convergence in these spaces.
\end{abstract}

\noindent {\bf MSC2010:}  60G17, 60G22, 60G52\\
\noindent{\bf Keywords:} Stable and multistable distributions, fractional and multifractional processes, varying Hurst parameter, Haar basis, random series of functions.

\section{Introduction}
\label{sec:intro}

The main idea behind multifractional processes is that Hurst parameter which governs path roughness is no longer a constant but a function whose values can change from point to point (see e.g. \cite{Aya18}). Thus, such processes provide more flexible models than the classical fractional Brownian motion whose path roughness remains everywhere the same. In the same spirit, the articles \cite{FL09,FGL09,falconer2012multistable} have proposed three different (non-equivalent) approaches allowing to generalize stable stochastic processes (see for instance \cite{samorodnitsky:taqqu:1994book}) in such a way that the parameter $\alpha$ governing the heavy tail behaviour of their distributions becomes a function. Such generalizations are called multistable processes. 
The approach introduced in \cite{falconer2012multistable} relies on the construction of multistable stochastic integrals. Such an integral $\I$ depends on a functional parameter $\al(\cdot)$; this deterministic Lebesgue measurable function $\al (\cdot)$ is defined on the real line $\R$ and with values in some compact interval $[\undal\,, \oval ]$ included in $(0,2]$.
Throughout this article, we assume that $\al (\cdot)$ belongs to the H\"older space $\ce^{1+\rho_{\al}} ([0,1])$, for some $\rho_{\al}\in (0,1)$; in other words $\al (\cdot)$ is continuously differentiable on $[0,1]$ and its derivative $\al'(\cdot)$ satisfies a uniform H\"older condition on $[0,1]$ of order $\rho_{\al}$, that is one has $|\al' (s_1)-\al' (s_2)|\le c |s_1-s_2|^{\rho_{\al}}$, for some constant $c>0$ and for all $(s_1,s_2)\in [0,1]^2$. Moreover, we assume that 
\begin{equation}
\label{eq:condal}
1<\undal \le \al (s)\le \oval <2, \quad\mbox{for all $s\in\R$.}
\end{equation}
The integrands associated with the multistable stochastic integral $\I$ are the deterministic functions from $\R$ to $\R$ belonging to 
$\Fa$, the Lebesgue space of variable order defined as:
\begin{equation}
\label{def:fa}
 \Fa:=\Big\{\mbox{$f$  s.t. $f$ is a Lebesgue measurable function from $\R$ to $\R$ and } \int_\R |f(s)|^{\al (s)} ds < +\infty \Big\}.
\end{equation}
Notice that, for any fixed $f\in \Fa^{\ast}:=\Fa \setminus \{0\}$, the function from $(0,+\infty)$ into itself $\lambda\mapsto \int_{\R}\lambda^{-\al (s)} |f(s)|^{\al (s)} ds$ is continuous and strictly decreasing, and one has 
$\lim_{\lambda\rightarrow 0^+}\int_{\R}\lambda^{-\al (s)} |f(s)|^{\al (s)} ds=+\infty$ and $\lim_{\lambda\rightarrow +\infty}\int_{\R}\lambda^{-\al (s)} |f(s)|^{\al (s)} ds=0$. Therefore, there exists a unique positive real number denoted by $\|f\|_{\alpha}$ such that 
\begin{equation}
\label{eq:def-normF}
 \int_{\R}\|f\|_{\alpha}^{-\al (s)} |f(s)|^{\al (s)} ds=1.
 \end{equation}
The map $\|\cdot\|_{\alpha}$ defined on $\Fa$ in this way and by using the convention that $\| 0\|_{\al}=0$ is a quasi-norm on $\Fa$; notice that the only difference between a norm and a quasi-norm is that in the latter case the triangular inequality holds up to a multiplicative constant, namely there exists $c'\in [1,+\infty)$, such that $\|f+g\|_\al \le c'\big (\| f\|_\al+ \|g \|_\al \big)$, for all $f,g\in\Fa$. Also notice that one can derive from (\ref{def:fa}) and the inequality 
\[
 \int_\R |f(s)|^{\al (s)} ds\le  \int_\R |f(s)|^{\undal}\,ds+ \int_\R |f(s)|^{\oval} ds,
 \]
which is satisfied by any Lebesgue measurable function $f$ from $\R$ to $\R$, that
\begin{equation}
\label{eq:faint}
L^{\undal}\,(\R)\cap L^{\oval}(\R)\subseteq \Fa \,,
\end{equation}
where, for all $p\in (0,+\infty]$, one denotes $L^p (\R)$ the classical Lebesgue space of order $p$ of real-valued functions over $\R$. Moreover, there is a finite constant $\kappa_1$ only depending on $\undal$ and $\oval$, such that, for all $f\in L^{\undal}\,(\R)\cap L^{\oval}(\R)$, one has
\begin{equation}
\label{eq:inegFa}
\| f\|_{\al}\le \kappa_1 \big (\| f\|_{\undal}+\| f\|_{\oval}\big)=\kappa_1\Big (\int_\R |f(s)|^{\undal}\,ds\Big)^{1/\,\undal}+ \kappa_1\Big (\int_\R |f(s)|^{\,\oval} ds \Big)^{\,1/\oval}.
\end{equation}
The latter inequality simply results from the fact that 
\begin{eqnarray*}
&& \int_\R \Big (2^{1/\,\undal} \| f\|_{\undal}+2^{1/\,\oval} \| f\|_{\oval}\Big)^{-\al (s)} |f(s)|^{\al (s)} ds\\
&& \le \int_\R \Big (2^{1/\,\undal} \| f\|_{\undal}+2^{1/\,\oval} \| f\|_{\oval}\Big)^{-\undal} \, |f(s)|^{\undal} \, ds
+\int_\R \Big (2^{1/\,\undal} \| f\|_{\undal}+2^{1/\,\oval} \| f\|_{\oval}\Big)^{-\oval} |f(s)|^{\oval} ds\\
&& \le 2^{-1}\int_\R  \| f\|_{\undal}^{-\undal} \, |f(s)|^{\undal} \, ds+2^{-1}\int_\R  \| f\|_{\oval}^{-\oval} \, |f(s)|^{\oval} \, ds=1.
\end{eqnarray*}

Let us now recall some fundamental properties of the multistable stochastic integral $\I$ which was introduced in \cite{falconer2012multistable}. Denote by $L^{\gamma} (\O,\A, \PR)$ the space of the real-valued random variables on a given probability space $(\O,\A, \PR)$ whose absolute moment of order $\gamma$ is finite, where $\gamma\in (0,\undal)$ is arbitrary and fixed. The integral $\I$ is a linear map from $\Fa$ into $L^{\gamma} (\O,\A, \PR)$ such that, for all $f\in \Fa$, the characteristic function $\Phi_{\I (f)}$ of the random variable $\I (f)$ satisfies 
\begin{equation}
\label{eq:charI}
\Phi_{\I (f)}(\xi):=\ESP \big ( e^{i \xi \I (f)}\big)=\exp\Big (-\int_{\R} \big | \xi f(s)\big |^{\al (s)} ds\Big)\,,\quad\mbox{for every $\xi\in\R$.}
\end{equation}
Observe that (\ref{eq:charI}) implies that $\I (f)$ has a symmetric distribution. Similarly to stable stochastic integrals (see for instance \cite{samorodnitsky:taqqu:1994book}), one can in a natural way associate to the multistable stochastic integral $\I$ an independently scattered random measure denoted by 
$\mathcal{M}_\al$ (see \cite{falconer2012multistable}). Thus, $\I (f)$ is frequently denoted by $\int_\R f(s) \Ma{ds}$. It is worth mentioning that an upper bound of the asymptotic behavior at $+\infty$ of the tail of the distribution of the random variable $\int_\R f(s) \Ma{ds}$ is provided by Proposition~2.3 of \cite{falconer2012multistable}: 
\begin{equation}
\label{eq:taildist1}
\PR\bigg (\Big | \int_\R f(s) \Ma{ds}\Big |\ge \lambda \bigg)\le \kappa_2 \int_{\R}\lambda^{-\al (s)} |f(s)|^{\al (s)} ds,\quad\mbox{for all $\lambda\in (0,+\infty)$,} 
\end{equation}
where $\kappa_2$ is a constant only depending on $\undal$ and $\oval$. The same proposition also provides, thanks to (\ref{eq:taildist1}), an estimate for the absolute moment of any order $\gamma\in (0,\undal)$ of $\int_\R f(s) \Ma{ds}$:
\begin{equation}
\label{eq:taildist2}
\ESP \bigg (\Big | \int_\R f(s) \Ma{ds}\Big |^\gamma \bigg)\le \kappa_3 (\gamma)\,\| f\|_\al^{\gamma},
\quad\mbox{for each fixed $\gamma\in (0,\undal)$,}
\end{equation}
where $\kappa_3 (\gamma)$ is a finite constant only depending on $\gamma$, $\undal$ and $\oval$. We mention in passing that the paper \cite{Aya13} has shown that the inequality (\ref{eq:taildist1}) is sharp: the reverse inequality also holds.


\section{Main result and simulations}
\label{sec:stat}

Let us now give the main motivation behind our present article. The paper \cite{ham2015} has introduced via Haar basis an almost surely uniformly convergent wavelet-type random series representation for the stable stochastic field which generates linear multifractional stable motions \cite{stoev2004stochastic,stoev2005path}. In our present article, we intend to generalize this result to the framework of the multistable stochastic field which generates linear multifractional multistable motions of Riemann-Liouville type. The latter field is denoted by $\big\{X(u,v)\,:\, (u,v)\in [0,1]\times (1/\undal\,,1)\big\}$, and defined, for all $(u,v)\in [0,1]\times (1/\undal\,,1)$, as:
\begin{equation}
\label{def:Xgenerated}
X(u,v):=\int_\R K_{u,v}(s) \Ma{ds}, 
\end{equation}
where, for every $(u,v,s)\in [0,1]\times (1/\undal\,,1)\times \R$,
\begin{equation}\label{def_Kuv}
K_{u,v}(s):= (u-s)_+^{v-\frac{1}{\alpha(s)}}\ind_{[0,1]}(s)=
\left\lbrace
\begin{array}{ll}
0 & \mbox{ if }s\notin [0,u),\\
(u-s)^{v-\frac{1}{\alpha(s)}} & \mbox{ otherwise.}
\end{array}
\right.
\end{equation}
It can easily be seen that, for each fixed $(u,v)\in [0,1]\times (1/\undal\,,1)$, one has 
\begin{equation}
\label{eq:ineg-kuv}
0\le K_{u,v}(s) \le \ind_{[0,1]}(s), \quad\mbox{for all $s\in\R$.}
\end{equation}
Thus, the function $K_{u,v}$ belongs to the space $\Fa$ (see (\ref{def:fa})) which guarantees the existence of the multistable stochastic integral in (\ref{def:Xgenerated}). Also, one can derive from (\ref{eq:ineg-kuv}) that the function $K_{u,v}$ belongs to all the Lebesgue spaces $L^p ([0,1])$, $p\in (0,+\infty]$, and in particular to the Hilbert space $L^2 ([0,1])$. A well-known orthonormal basis of the latter space was introduced by Haar in \cite{haar}; it consists in the following collection of functions: 
\begin{equation}
\label{A:eq:haar}
\left\{
\begin{array}{l}
\ind_{[0,1)}(\bullet)\, ,\\
 2^{j/2} h(2^j\bullet-k) = 2^{j/2} \left(\ind_{\big[ 2^{-j}k, 2^{-j}(k+\frac{1}{2})\big)}(\bullet) - \ind_{\big[ 2^{-j}(k+\frac{1}{2}), 2^{-j}(k+1)\big)}(\bullet) \right), \quad  j \in \Z_+\mbox{ and } k \in \{ 0, \dots, 2^j-1\},
\end{array}
\right.
\end{equation}
where $h:=\ind_{[0,1/2)}-\ind_{[1/2,1)}$. By expanding, for each fixed $(u,v)\in [0,1]\times (1/\undal\,,1)$, the function $K_{u,v}$ on the latter basis, one gets that
\begin{equation}
\label{kuv:decompLoval}
K_{u,v}(\bullet)=  \| K_{u,v}\|_1\ind_{[0,1)}+\sum_{j=0}^{+\infty} \sum_{k=0}^{2^j-1} w_{j,k} (u,v) h(2^j\bullet-k)\,, 
\end{equation}
where $\| K_{u,v}\|_1:=\int_0^1 K_{u,v} (s)ds$ and 
\begin{equation}
\label{eq:def-wjk}
w_{j,k} (u,v):= 2^j \int_0 ^1 K_{u,v}(s) h(2^js-k) ds, \quad\mbox{for all $j\in\Z_+$ and $k\in\{0,\ldots, 2^j-1\}$.} 
\end{equation}
A priori, the series in (\ref{kuv:decompLoval}) is convergent for the $L^2([0,1])$ norm; yet (\ref{eq:condal}), (\ref{eq:inegFa}) and the H\"older inequality imply that this series is also convergent for the quasi-norm $\|\cdot\|_\al$. Thus,
using (\ref{def:Xgenerated}) and (\ref{eq:taildist2}) one gets that
\begin{equation}
\label{eq:ser-X}
X(u,v)=\| K_{u,v}\|_1\eta +\sum_{j=0}^{+\infty} \sum_{k=0}^{2^j-1} w_{j,k} (u,v)\vep_{j,k}\,,
\end{equation}
where $\eta:=\int_\R \ind_{[0,1)}(s) \Ma{ds}=\mathcal{M}_\al \big ([0,1)\big)$ and 
\begin{equation}
\label{eq:def-epjk}
\vep_{j,k}:=\int_\R h(2^j s-k) \Ma{ds}, \quad\mbox{for all $j\in\Z_+$ and $k\in\{0,\ldots, 2^j-1\}$.} 
\end{equation}
A priori, the series in (\ref{eq:ser-X}) is convergent in the sense of the $L^{\gamma} (\O,\A, \PR)$ (quasi)-norm, for each fixed $(u,v)\in [0,1]\times (1/\undal\,,1)$ and $\gamma\in (0,\undal)$. The main goal of our article is to show that it is also convergent in a much stronger sense, namely:
\begin{thm}
\label{theo:main}
For all integer $J\ge 1$ and $(u,v)\in [0,1]\times (1/\undal,1)$, let $X^J (u,v)$ be the partial sum of the series in (\ref{eq:ser-X}) defined as:
\begin{equation}
\label{theo:main:eq2}
X^J (u,v)=\| K_{u,v}\|_1\eta +\sum_{j=0}^{J-1} \sum_{k=0}^{2^j-1} w_{j,k} (u,v)\vep_{j,k}\,.
\end{equation}
Then, there exists an event $\O^*$ of probability~$1$, such that, for all $\o\in\O^*$ and for every real numbers $a$ and $b$ satisfying $1/\undal<a<b<1$, $\big (X^J (\cdot,\cdot, \o)\big)_{J\in\N}$ is a Cauchy sequence in $\ce\big ([0,1]\times [a,b]\big)$ the Banach space of the real-valued continuous functions over the rectangle $[0,1]\times [a,b]$ equipped with the uniform norm denoted by $\|\cdot\|_{\ce}$. Thus, it is convergent in this space. Moreover, the multistable stochastic field $\big\{\widetilde{X}(u,v)\,:\, (u,v)\in [0,1]\times [a,b]\big\}$  with continuous paths, defined as:
\begin{equation}
\label{theo:main:eq3}
\widetilde{X}(\cdot,\cdot,\o):=\lim_{J\rightarrow +\infty} X^J (\cdot,\cdot, \o),\,\mbox{if $\o\in\O^*$,}\quad\mbox{and}\quad\widetilde{X}(\cdot,\cdot,\o):=0,\, \mbox{else,}
\end{equation}
is a modification of $\big\{X(u,v)\,:\, (u,v)\in [0,1]\times [a,b]\big\}$, and one has, for any fixed $\zeta>1/\undal$ and $\o\in\O^*$,
\begin{equation}
\label{theo:main:eq4}
\sup\Big\{J^{-\zeta}\, 2^{J\min\{\rho_{\al},\,a-1/\undal\,\}}\big | \widetilde{X}(u,v,\o)-X^J (u,v,\o)\big| \,:\, (J,u,v)\in \N\times [0,1]\times [a,b]\Big\}<+\infty.
\end{equation}
\end{thm}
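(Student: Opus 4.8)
The plan is to bound the dyadic increments $X^{j+1}(\cdot,\cdot)-X^{j}(\cdot,\cdot)$ uniformly on $[0,1]\times[a,b]$ and to prove $\sum_{j\ge0}\|X^{j+1}-X^{j}\|_{\ce}<+\infty$ a.s.\ with the announced rate; the Cauchy property, the existence of a continuous modification, and $(\ref{theo:main:eq4})$ then follow by summing the tail $\sum_{j\ge J}$, while the modification claim comes from comparing this a.s.\ limit with the already known $L^{\gamma}(\O,\A,\PR)$-limit $X(u,v)$ through a subsequence. The key observation is that, for fixed $j$, $X^{j+1}(u,v)-X^{j}(u,v)=\sum_{k=0}^{k_u}w_{j,k}(u,v)\,\vep_{j,k}$ where $k_u:=\lceil 2^{j}u\rceil-1$ (since $K_{u,v}$ vanishes on $[u,1)$). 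Applying Abel's summation rule with the partial sums $E_{j,k}:=\sum_{k'=0}^{k}\vep_{j,k'}=\int_\R\big(\sum_{k'=0}^{k}h(2^{j}s-k')\big)\,\Ma{ds}$ yields
\[
\sup_{(u,v)\in[0,1]\times[a,b]}\big|X^{j+1}(u,v)-X^{j}(u,v)\big|\ \le\ R_j\,V_j,\qquad R_j:=\max_{0\le k\le 2^{j}-1}\big|E_{j,k}\big|,
\]
with $V_j:=\sup_{(u,v)\in[0,1]\times[a,b]}\big(|w_{j,k_u}(u,v)|+\sum_{k=0}^{k_u-1}|w_{j,k}(u,v)-w_{j,k+1}(u,v)|\big)$. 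The decisive point is that the random factor $R_j$ does not depend on $(u,v)$, so the whole uniform control is reduced to a deterministic total-variation estimate on the Haar coefficients plus one maximal bound for the noise.

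To estimate the deterministic factor $V_j$, I would use $K_{u,v}(s)=(u-s)^{v-1/\al(s)}$ on $[0,u)$ together with $v-1/\al(s)\ge a-1/\undal=:\beta>0$ and $\al\in\ce^{1+\rho_{\al}}([0,1])$. The $O(1)$ cells $[2^{-j}k,2^{-j}(k+1))$ touching the singularity $s=u$ give coefficients $|w_{j,k}(u,v)|\le 2^{j}\int (u-s)_+^{\beta}\ind_{[2^{-j}k,2^{-j}(k+1))}(s)\,ds\le 2^{-j\beta}$. For a cell at dyadic distance $m\ge1$ from $u$, one uses the vanishing moment $\int_0^1 h=0$ \emph{twice} — once on $w_{j,k}$ itself and once on the consecutive difference, so that $w_{j,k}-w_{j,k+1}$ is the integral of a genuine second-order difference of $K_{u,v}$ — to obtain $|w_{j,k}(u,v)-w_{j,k+1}(u,v)|\le C\,2^{-j(1+\rho)}(m2^{-j})^{\beta-1-\rho}$ for any $\rho\in(0,\rho_{\al}]$; this requires that the Hölder-$\rho$ modulus of $\partial_s K_{u,v}$ on a cell at distance $d$ from $u$ be $\le C d^{\beta-1-\rho}$ (the dominant singular term of $\partial_s K_{u,v}$ is $-(v-1/\al(s))(u-s)^{v-1/\al(s)-1}$, which is $\ce^{1}$ away from $u$ with derivative of size $d^{\beta-2}$, the $\ln(u-s)$-term being of strictly lower order, so interpolation between $d^{\beta-1}$ and $d\cdot d^{\beta-2}$ gives the claim). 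Summing over $k$ and optimising $\rho$ in the resulting series $\sum_{m\ge1}(m2^{-j})^{\beta-1-\rho}$ gives $V_j\le C\,j^{c_0}\,2^{-j\min\{\rho_{\al},\,\beta\}}$, with $c_0=0$ whenever $a-1/\undal\neq\rho_{\al}$ and only a harmless logarithmic loss in the equality case.

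For the random factor $R_j$, fix $\gamma\in(1,\undal)$ (possible because $\undal>1$). For fixed $j$ the variables $\vep_{j,k}=\int_\R h(2^{j}s-k)\,\Ma{ds}$ are independent (disjoint Haar supports and $\Ma{\cdot}$ independently scattered) and symmetric by $(\ref{eq:charI})$, hence centred and in $L^{\gamma}(\O,\A,\PR)$; thus $(E_{j,k})_{0\le k\le 2^{j}-1}$ is an $L^{\gamma}$-martingale and $(|E_{j,k}|^{\gamma})_{k}$ a non-negative submartingale. Doob's maximal inequality combined with $(\ref{eq:taildist2})$ applied to $E_{j,2^{j}-1}=\int_\R\big(\sum_{k=0}^{2^{j}-1}h(2^{j}s-k)\big)\,\Ma{ds}$, whose integrand has modulus $\le\ind_{[0,1)}$ so that its $\|\cdot\|_{\al}$-quasi-norm is $\le1$ (monotonicity of $\|\cdot\|_\al$, or $(\ref{eq:inegFa})$), yields $\ESP[R_j^{\gamma}]\le\big(\tfrac{\gamma}{\gamma-1}\big)^{\gamma}\kappa_3(\gamma)$ uniformly in $j$. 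Given a fixed $\zeta>1/\undal$, pick $\gamma\in(1/\zeta,\undal)$; Markov's inequality then gives $\sum_{j}\PR(R_j>(j+1)^{\zeta})\le\big(\tfrac{\gamma}{\gamma-1}\big)^{\gamma}\kappa_3(\gamma)\sum_{j}(j+1)^{-\zeta\gamma}<+\infty$, so Borel--Cantelli produces an event of probability one on which $R_j\le\Xi(\o)\,(j+1)^{\zeta}$ for all $j$; intersecting over a sequence $\zeta_m\downarrow1/\undal$ gives a single event $\O^{*}$, not depending on $a,b$, valid for every $\zeta>1/\undal$.

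Putting the two estimates together, on $\O^{*}$ one has $\|X^{j+1}-X^{j}\|_{\ce}\le R_jV_j\le C\,\Xi(\o)\,(j+1)^{\zeta+c_0}\,2^{-j\min\{\rho_{\al},\,a-1/\undal\}}$, which is summable; hence $(X^{J}(\cdot,\cdot,\o))_J$ is Cauchy in the Banach space $\ce([0,1]\times[a,b])$, its limit $\widetilde X$ has continuous paths, and summing $\sum_{j\ge J}$ of the previous bound gives exactly $(\ref{theo:main:eq4})$. Since for each fixed $(u,v)$ the sequence $X^{J}(u,v)$ converges to $X(u,v)$ in $L^{\gamma}(\O,\A,\PR)$ and to $\widetilde X(u,v)$ on $\O^{*}$, a subsequence argument gives $\widetilde X(u,v)=X(u,v)$ a.s., i.e.\ $\widetilde X$ is a modification of $X$. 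I expect the main obstacle to be the deterministic estimate of $V_j$: obtaining the sharp exponent $\min\{\rho_{\al},\,a-1/\undal\}$ \emph{uniformly in} $(u,v)$ forces one to combine the varying-strength singularity of $(u-s)^{v-1/\al(s)}$ at $s=u$ with the mere $\ce^{1+\rho_{\al}}$ regularity of the exponent $v-1/\al(s)$, which is precisely what the double use of the Haar vanishing moment and the interpolation bound for the Hölder modulus of $\partial_s K_{u,v}$ are meant to handle; once $V_j$ is controlled, everything probabilistic reduces to the Abel/Doob/Borel--Cantelli scheme above.
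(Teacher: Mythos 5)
Your strategy is essentially the paper's own: you bound the dyadic block $\mathcal{L}_j=X^{j+1}-X^{j}$ by Abel summation on the Haar coefficients, control the noise through the partial sums $\tau_{j,k}$ via Doob's maximal inequality for the submartingale $|\tau_{j,k}|^{\gamma}$ together with (\ref{eq:taildist2}) and Borel--Cantelli, and reduce the deterministic factor to a second-order difference of $K_{u,v}$ (the double use of the Haar vanishing moment), with the $O(1)$ cells near the singularity $s=u$ treated by the crude bound $|w_{j,k}|\lesssim 2^{-j(a-1/\undal)}$; the modification claim via $L^{\gamma}$-convergence and a subsequence is also as intended. So the probabilistic skeleton and the overall architecture match Lemma~\ref{lem:bou-taujk}, Lemma~\ref{prop1:maj:wjk}, Lemmas~\ref{lem:maj:I1}--\ref{lem:maj:I3} and the proof of Theorem~\ref{theo:main}.

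There is, however, one concrete shortfall in your deterministic estimate. You merge the two sources of error into a single interpolated bound $|w_{j,k}-w_{j,k+1}|\le C\,2^{-j(1+\rho)}(m2^{-j})^{a-1/\undal-1-\rho}$ with one exponent $\rho\in(0,\rho_{\al}]$, and you then optimise $\rho$; this forces $\rho\le\rho_{\al}$, so in the boundary case $a-1/\undal=\rho_{\al}$ the series $\sum_{m\le 2^{j}}m^{a-1/\undal-1-\rho}$ produces an extra factor $j$, i.e.\ $V_j\lesssim j\,2^{-j\min\{\rho_{\al},\,a-1/\undal\}}$. This loss is \emph{not} harmless for the statement as written: after multiplying by the noise bound $(1+j)^{\zeta'}$ and summing the tail, you only get $|\widetilde X-X^{J}|\lesssim J^{\zeta'+1}2^{-J\min\{\rho_{\al},\,a-1/\undal\}}$, so (\ref{theo:main:eq4}) would follow only for $\zeta>1/\undal+1$ in that case, not for every $\zeta>1/\undal$. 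The fix is exactly what Lemma~\ref{lem:useful:2} does: do not interpolate, but keep the two contributions separate, namely a term $\lesssim 2^{-j(1+\rho_{\al})}$ \emph{uniform in the cell} (coming from the H\"older modulus of $\al'$, the factor $(u-s)^{a-1/\undal}|\log(u-s)|$ being simply bounded) and a term $\lesssim 2^{-2j}\,d^{\,a-1/\undal-2}$ (coming from the smooth part, $d$ the distance to $u$). Since $a-1/\undal-2<-1$, summing over the cells gives $2^{-j\rho_{\al}}+2^{-j(a-1/\undal)}$ with no logarithmic factor in all cases, and your argument then delivers (\ref{theo:main:eq4}) in full. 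With this adjustment your proof is correct and coincides with the paper's.
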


\begin{rem}
\label{improv:stable}
In view of (\ref{theo:main:eq2}) and of the fact that $\widetilde{X}$ is a modification of $X$, the inequality (\ref{theo:main:eq4}) provides an almost sure estimate of the rate of convergence for the uniform norm $\|\cdot\|_{\ce}$ of the random series of functions in (\ref{eq:ser-X}). Notice that in the particular case where $X$ is an $\al$-stable field (that is $\al(s)=\al$, for all $s\in [0,1]$, where $\al \in (1,2)$ is a constant parameter), this estimate of the rate of convergence becomes $\mathcal{O}(2^{-J(a-1/\al)} J^{1/\al+\eta})$, where $\eta$ is an arbitrarily small fixed positive real number. Thus, it improves the estimate $\mathcal{O}(2^{-J(a-1/\al)} J^{2/\al+\eta})$ which was previously obtained in \cite[Theorem~2.1]{ham2015}. 
\end{rem}

\begin{definition}
\label{def:lmmm}
Let $H(\cdot)$ be a deterministic function from $[0,1]$ into $[a,b]\subset (1/\undal,1)$. The  multifractional multistable Riemann-Liouville (mmRL) process of parameter $H(\cdot)$, generated by the field $\big\{\widetilde{X}(u,v)\,:\, (u,v)~\in~[0,1]~\times~ [a,b]\big\}$, is the multistable process denoted by $\{Y(t)\,:\, t\in [0,1]\}$ and defined as:
\begin{equation}
\label{def:lmmm:eq1}
Y(t):=\widetilde{X}(t,H(t)), \quad\mbox{for all $t\in [0,1]$.}
\end{equation}
Notice that when the function $H(\cdot)$ is a constant $\{Y(t)\,:\, t\in [0,1]\}$ is called fractional multistable Riemann-Liouville (fmRL) process.
\end{definition}

\begin{rem}
\label{rem1:lmmm}
It easily follows from Theorem~\ref{theo:main} and Definition~\ref{def:lmmm} that $\{Y(t)\,:\, t\in [0,1]\}$ has almost surely continuous paths as soon as $H(\cdot)$ is a continuous function.
\end{rem}

\begin{rem}
\label{rem1:fieldX}
Using (\ref{A:eq:haar}), (\ref{eq:def-wjk}), and (\ref{eq:def-epjk}), it can be shown by induction on $J$ that, for all $J\in\N$ and for each $(u,v)\in [0,1]\times (1/\undal,1)$, the random variable $X^J (u,v)$, defined in (\ref{theo:main:eq2}), can be expressed as:
\begin{equation}
\label{rem1:fieldX:eq1}
X^J (u,v)=\sum_{l=0}^{2^J-1} \overline{K}_{u,v}^{J,l} \, \mathcal{M}_\al \Big (\big [2^{-J} l, 2^{-J}(l+1)\big)\Big)\, ,
\end{equation}
where, for all $J\in\N$ and $l\in\{0,\ldots, 2^J-1\}$, $\overline{K}_{u,v}^{J,l}$ is the average value of the function $K_{u,v}$ on the dyadic interval $\big [2^{-J} l, 2^{-J}(l+1)\big)$, that is 
\begin{equation}
\label{rem1:fieldX:eq2}
\overline{K}_{u,v}^{J,l}:=2^{J}\int_{2^{-J} l}^{2^{-J}(l+1)} K_{u,v}(s)ds.
\end{equation}
The equalities (\ref{rem1:fieldX:eq1}), (\ref{theo:main:eq3}) and (\ref{def:lmmm:eq1}) provide an efficient method for simulating paths of the mmRL process $Y$. To this end, when $J$ is large enough, one uses the approximation:
\begin{equation}
\label{rem1:fieldX:eq3}
\mathcal{M}_{\alpha} \Big (\big [2^{-J} l, 2^{-J}(l+1)\big)\Big) \approx \mathcal{Z}_{\alpha(2^{-J}l)}  \Big (\big [2^{-J} l, 2^{-J}(l+1)\big)\Big)\, ,
\end{equation}
where the $\mathcal{Z}_{\alpha(2^{-J}l)}$, $l=0,\ldots, 2^J-1$, are independent usual symmetric stable random measures with stability parameters $\alpha(2^{-J}l)$, $l=0,\ldots, 2^J-1$. Notice that the approximation (\ref{rem1:fieldX:eq3}) is  justified by \cite[Theorem 2.6]{falconer2012multistable}.
\end{rem}

Here are some simulations:




\begin{figure}[!h]
  \begin{center}
    \subfloat[$\al$ function]{
\includegraphics[scale=0.15]{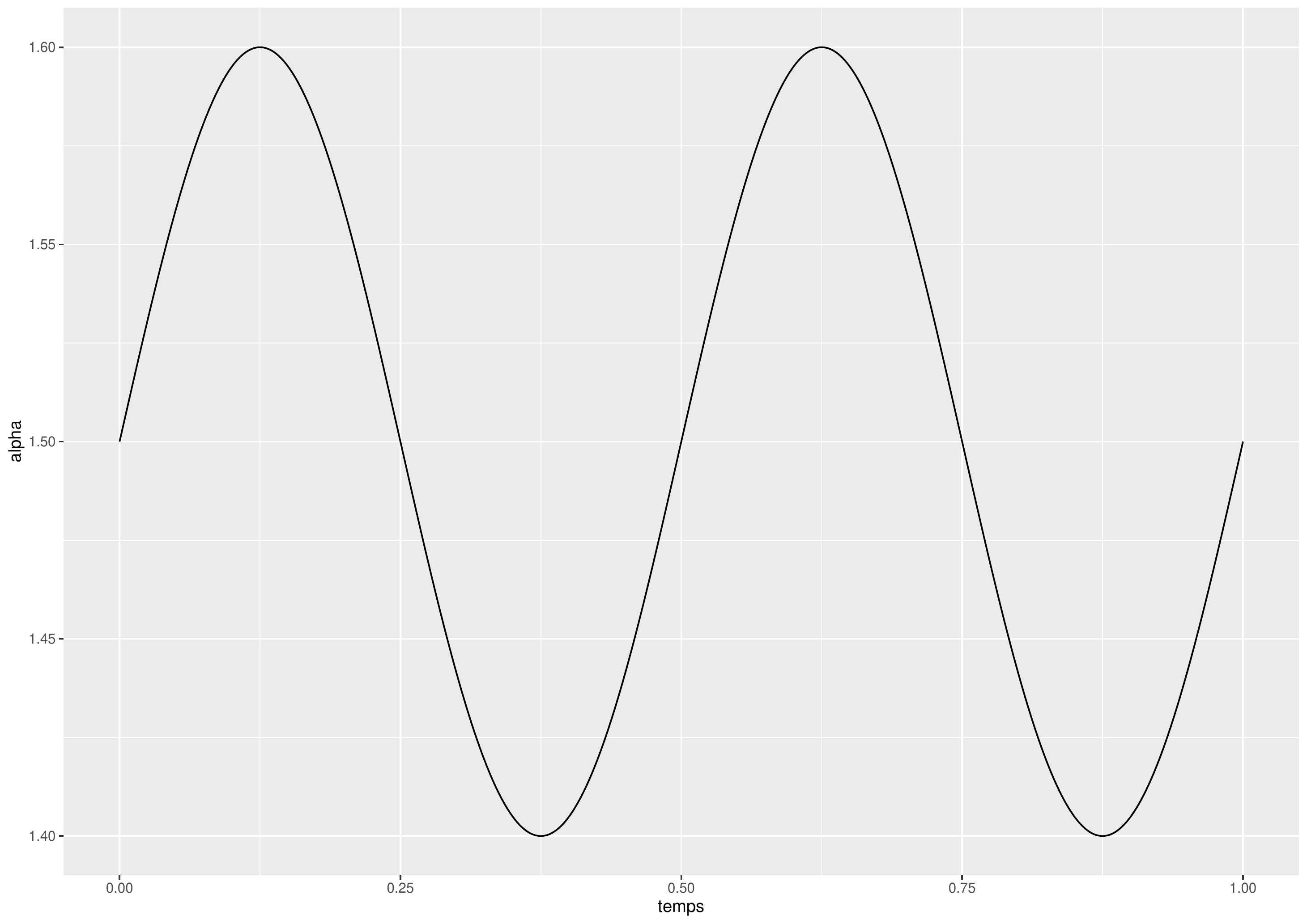}
                         }
    \subfloat[Hurst's function]{
		\includegraphics[scale=0.15]{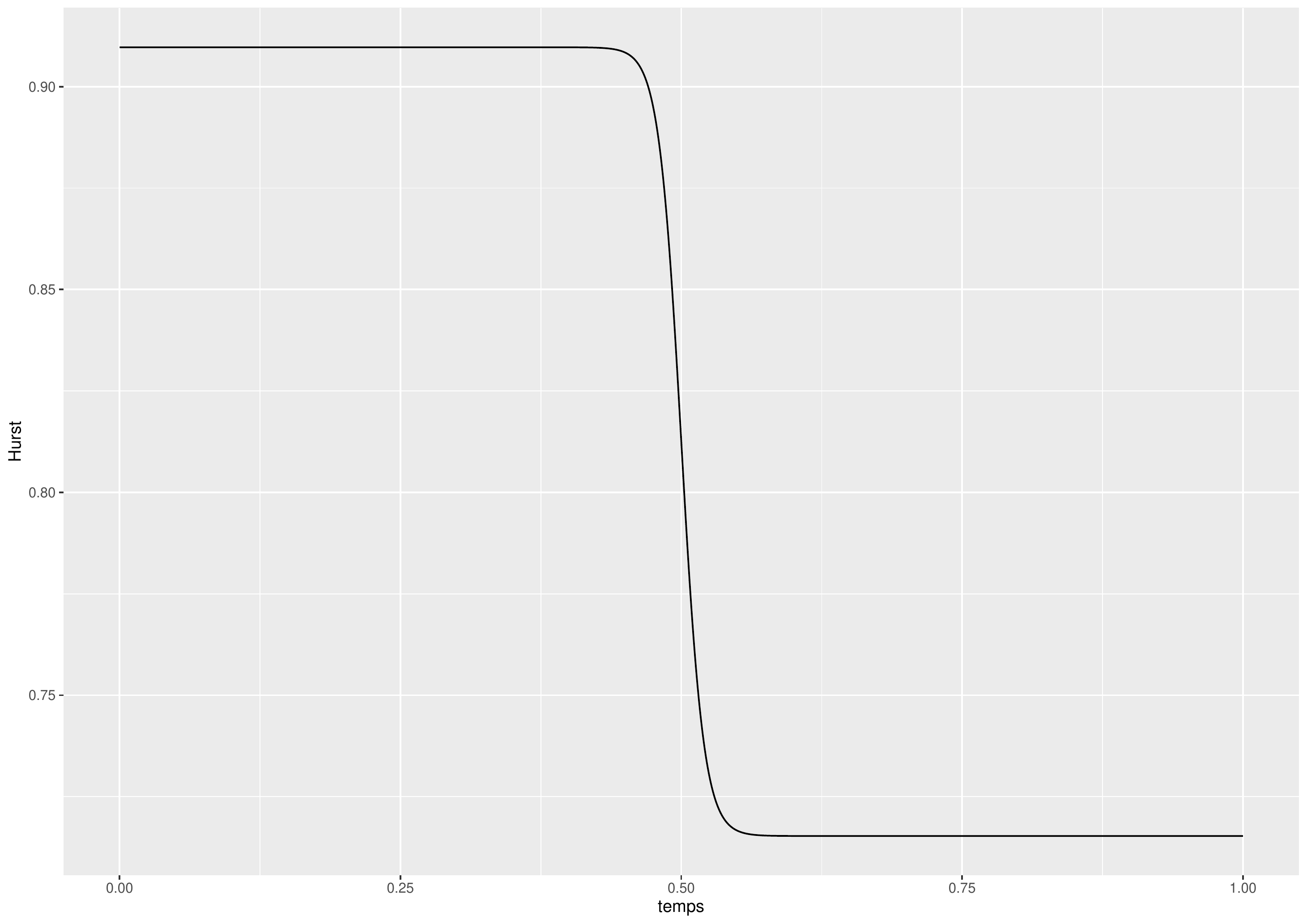}
                         }
    \subfloat[mmRL]{
      \includegraphics[scale=0.15]{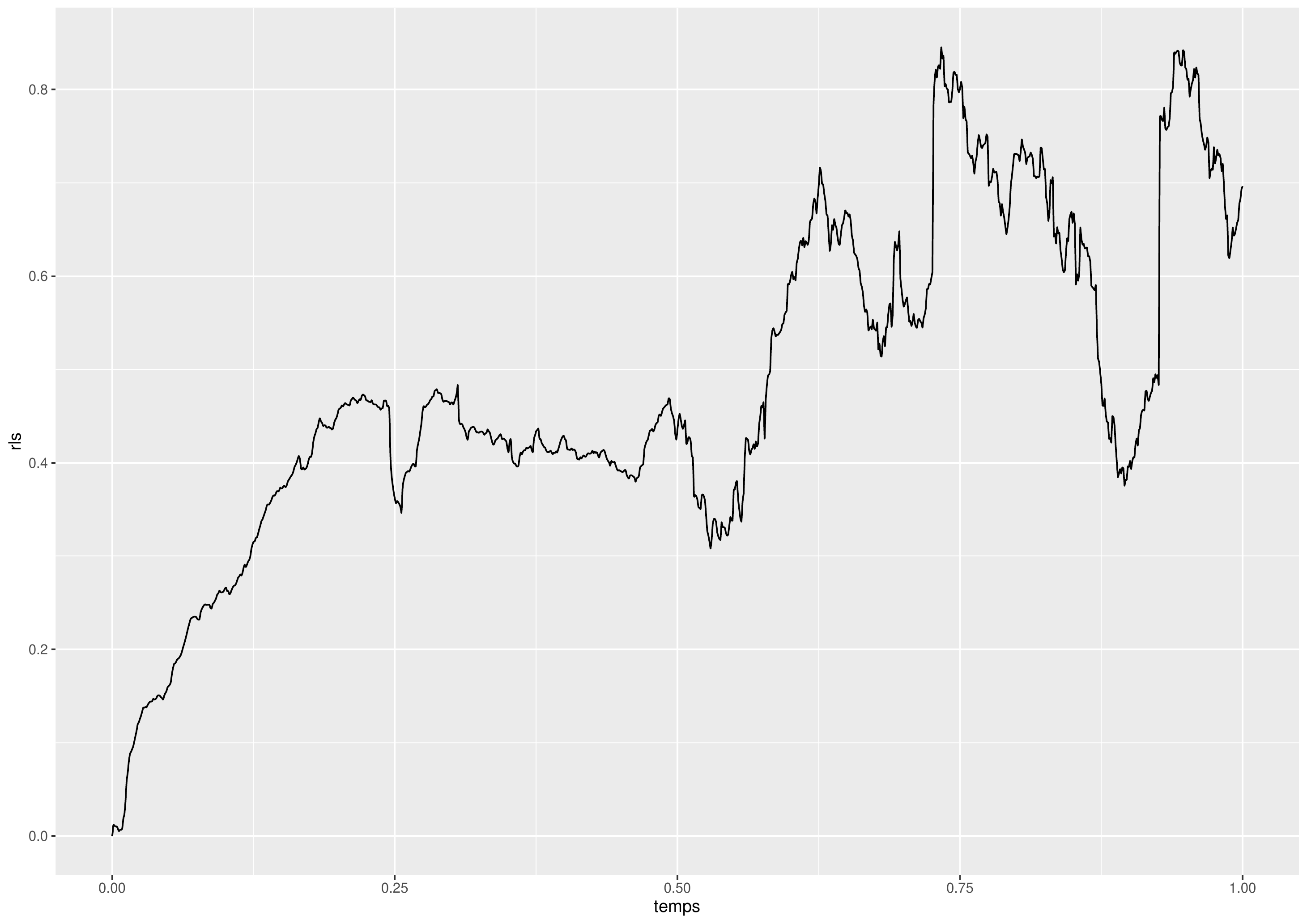}
                         }
    \caption{Multifractional Multistable Riemann-Liouville process}
    \label{fig:twosamplespath}
  \end{center}
\end{figure}

\pagebreak

For the same function $\al$ as above, 

\begin{figure}[!h]
  \begin{center}
    \subfloat[H=0.72]{
\includegraphics[scale=0.15]{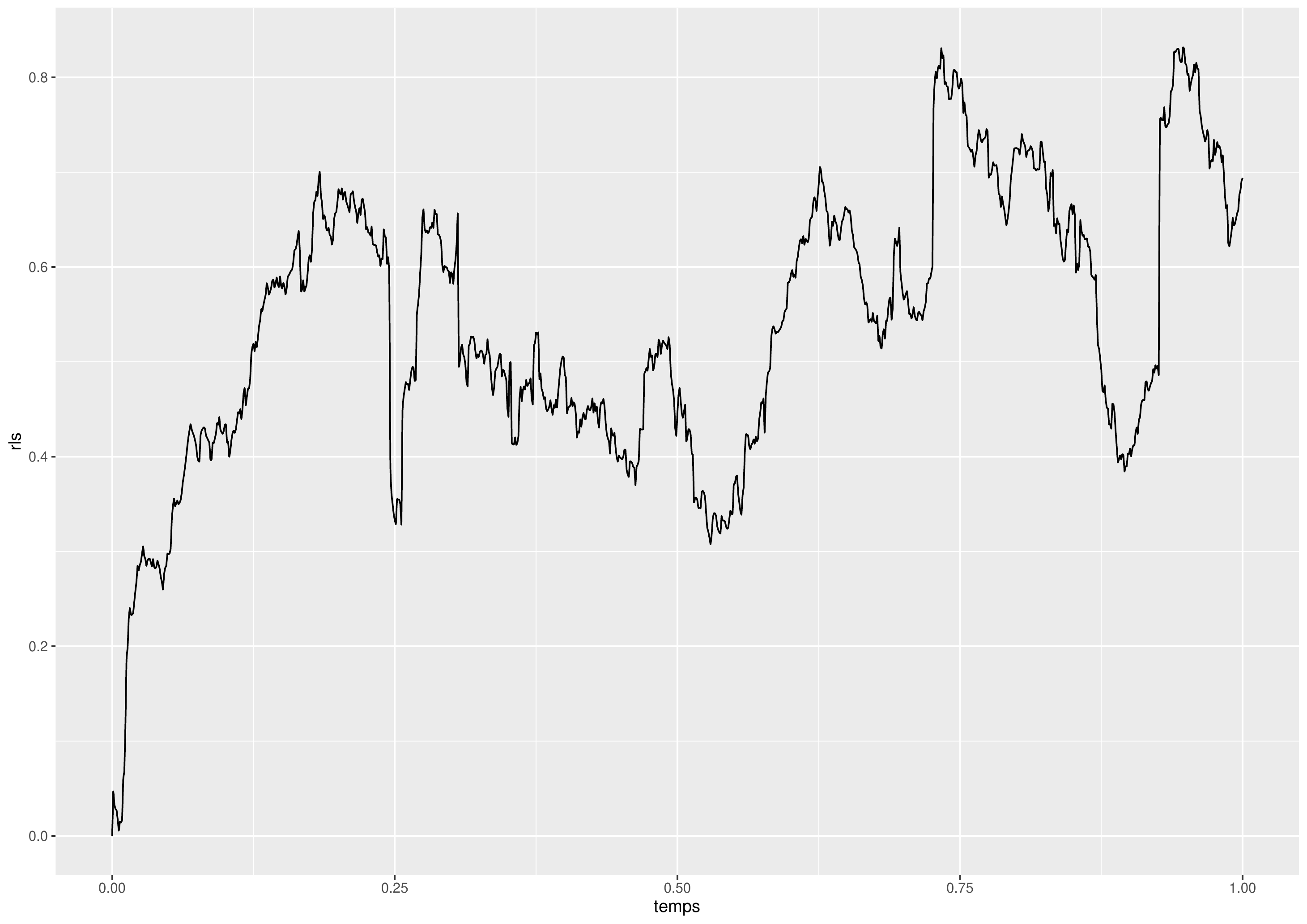}
                         }
    \subfloat[H=0.81]{
		\includegraphics[scale=0.15]{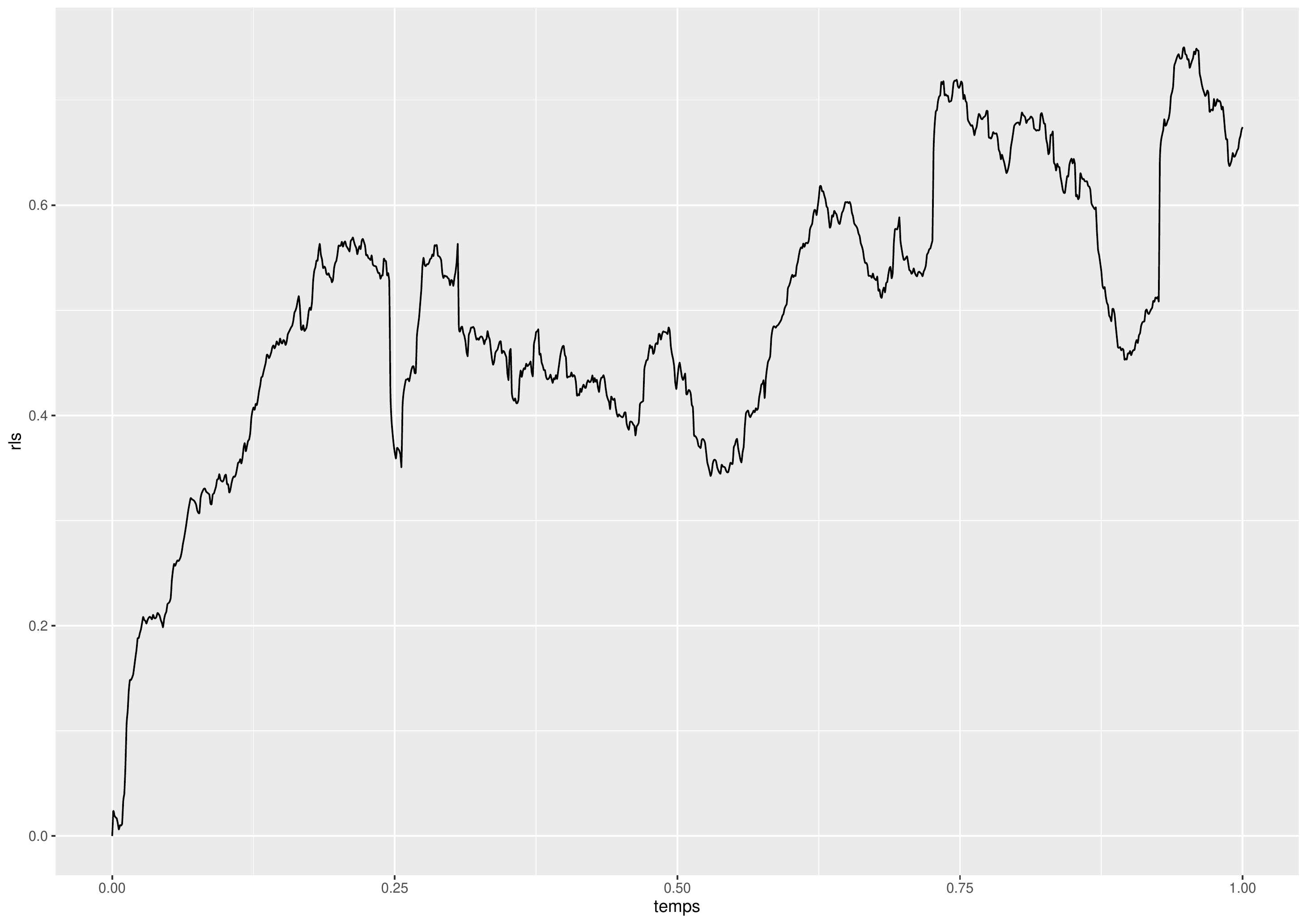}
                         }
    \subfloat[H=0.90]{
      \includegraphics[scale=0.15]{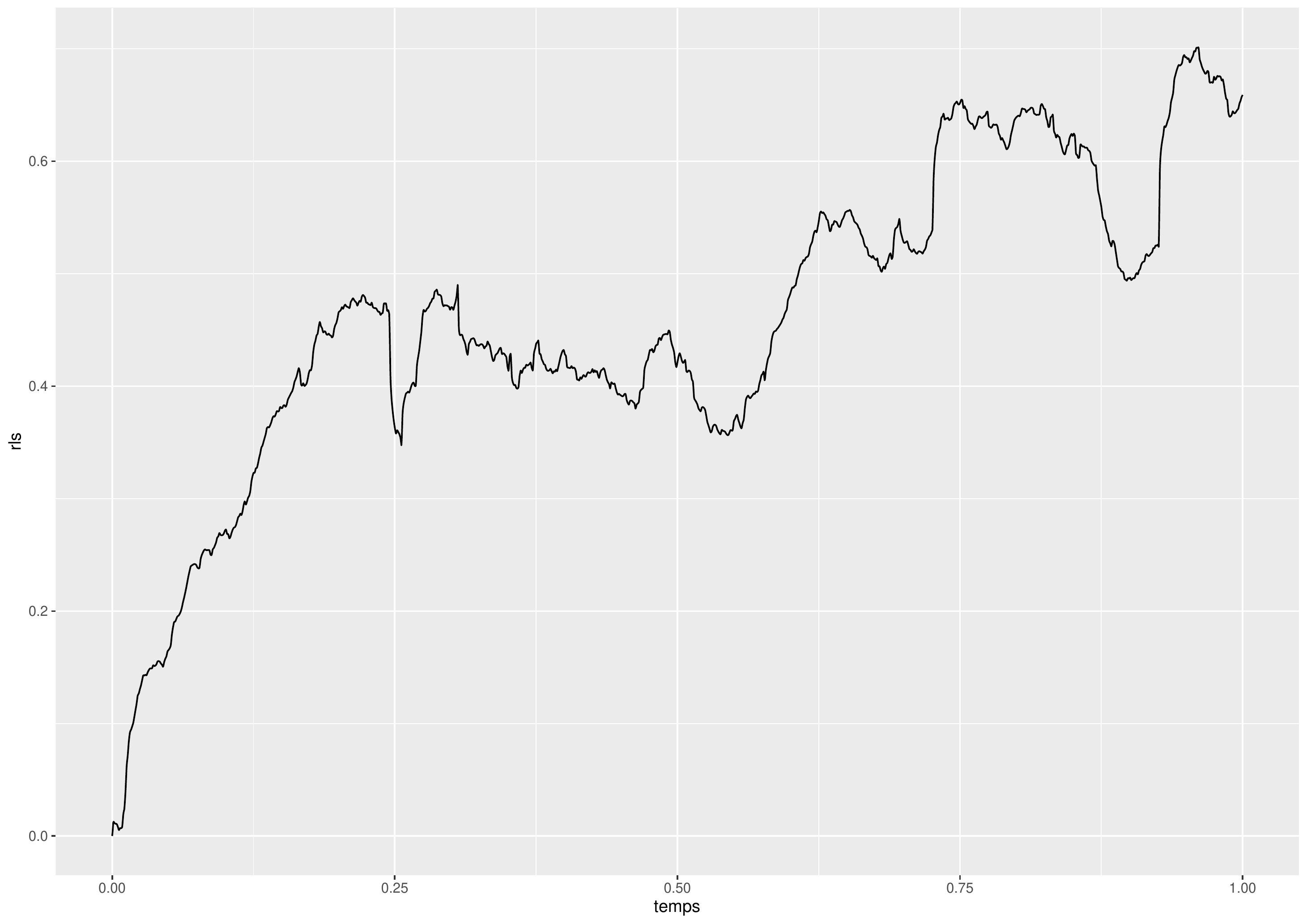}
 }
    \caption{Fractional multistable Riemann-Liouville processes}\label{fig:twosamplespath_frac2}
  \end{center}
\end{figure}

\section{Proof of the main result}\label{sec:proof}

The main two ideas of the proof of Theorem~\ref{theo:main} are: 
\begin{enumerate}
\item The use of the Abel's summation rule in order to express $\mathcal{L}_j(u,v)$ in a convenient way
(see Remark~\ref{rem:abel}).
\item The use of the Doob's maximal inequality for discrete submartingales in order to derive, for each $j\in\N$, a suitable upper for the supremum of the absolute values of the partial sums $\tau_{j,k}$, $k\in\{0,\ldots, 2^{j}-1\}$, of the multistable random variables $\epsilon_{j,0}\,,\ldots, \epsilon_{j,2^j-1}$ (see Remark~\ref{rem:mart},  Lemma~\ref{lem:bou-taujk} and its proof). 
\end{enumerate}
The first idea is borrowed from \cite{ham2015} while the second one is completely new.

\begin{rem}
\label{rem:abel}
For all $j\in\Z_+$ and $(u,v)\in [0,1]\times(1/\undal\,, 1)$, one sets
\begin{equation}
\label{eq:def:Ljuv}
\mathcal{L}_j(u,v):= \sum_{k=0}^{2^j-1} w_{j,k}(u,v) \varepsilon_{j,k}=\sum_{k=0}^{[2^ju]} w_{j,k}(u,v) \varepsilon_{j,k}\,.
\end{equation}
One mentions in passing that the last equality in (\ref{eq:def:Ljuv}), in which $[2^ju]$ denotes the integer part of
$2^j u$, follows from (\ref{A:eq:haar}) and (\ref{eq:def-wjk}). Using the Abel's summation rule one has that:
\begin{equation}
\label{eq:prop:Abel:XJuv}
\mathcal{L}_j(u,v)= \tau_{j,[2^ju]} w_{j,[2^ju]}(u,v)+\sum_{k=0}^{[2^ju]-1} \tau_{j,k} \left( w_{j,k}(u,v)-w_{j,k+1}(u,v) \right),
\end{equation}
where $\big\{\tau_{j,k}\, : \, j\in\Z_+ \mbox{ and } k\in\{0,\ldots, 2^{j}-1\}\big\}$ is the sequence of the multistable random variables defined, for all $j\in\Z_+$ and $k\in\{0,\ldots, 2^{j}-1\}$, as:
\begin{equation}
\label{def:taujk}
\tau_{j,k}:=\sum_{m=0}^k \varepsilon_{j,m}= \int_\R\Big ( \sum_{m=0}^k h(2^j s-m)\Big) \Ma{ds}.
\end{equation}
Notice that the last equality in (\ref{def:taujk}) follows from (\ref{eq:def-epjk}).
\end{rem}

\begin{rem}
\label{rem:mart}
One knows from (\ref{eq:def-epjk}) and (\ref{eq:taildist2}) that the multistable random variables $\varepsilon_{j,k}$, $j\in\N$ and $k\in\{0,\ldots, 2^j-1\}$, belong to $L^{\ga}(\O, \A, \PR)$, for all $\ga\in (0,\undal)$; which in particular means that they are in  $L^{1}(\O, \A, \PR)$ since $\undal >1$; notice that they are centered since their distributions are symmetric. Moreover, for each fixed $j~\in~\N$, the random variables $\varepsilon_{j,k}$, $k\in\{0,\ldots, 2^j-1\}$, are independent since the random measure $\mathcal{M}_\al$ is independently scattered and the supports of the functions $h(2^j \bullet-k)$, $k\in\{0,\ldots, 2^j-1\}$, are pairwise disjoints (up to Lebesgue negligible sets). Thus, in view of the first equality in (\ref{def:taujk}), it turns out that, for each fixed $j\in\N$, the sequence of random variables $\{\tau_{j,k}\}_{0\le k <2^j}$ is a discrete martingale with respect to the filtration $(\A_{j,k})_{0\le k <2^j-1}$ such that, for any $k\in\{0,\ldots, 2^j-2\}$, $\A_{j,k}$ denotes the smallest $\sigma$-algebra for which the random variables $\vep_{j,0},\ldots, \vep_{j,k}$ are measurable. 
\end{rem}

\begin{lemma}
\label{lem:bou-taujk}
There exists an event $\O^*$ of probability~$1$ such that on $\O^*$, one has, for all fixed $\zeta>1/\undal$\,, 
\begin{equation}
\label{lem:bou-taujk:eq1}
\sup_{j\in\Z_+}\Big\{(1+j)^{-\zeta}\, \sup_{0\le k < 2^j} |\tau_{j,k}|\Big\}<+\infty.
\end{equation} 
\end{lemma}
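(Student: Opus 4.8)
The plan is to control, for each fixed $j$, the supremum $\sup_{0\le k<2^j}|\tau_{j,k}|$ via a moment estimate obtained from Doob's maximal inequality, and then to sum the resulting tail probabilities over $j$ so that a Borel--Cantelli argument produces the almost sure event $\O^*$. First I would fix an exponent $\ga\in (1,\undal)$ (recall $\undal>1$, so such $\ga$ exists) and apply Doob's $L^\ga$ maximal inequality to the martingale $\{\tau_{j,k}\}_{0\le k<2^j}$ from Remark~\ref{rem:mart}: this gives $\ESP\big(\sup_{0\le k<2^j}|\tau_{j,k}|^\ga\big)\le (\ga/(\ga-1))^\ga\,\ESP\big(|\tau_{j,2^j-1}|^\ga\big)$. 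Now $\tau_{j,2^j-1}=\sum_{m=0}^{2^j-1}\vep_{j,m}=\int_\R\big(\sum_{m=0}^{2^j-1}h(2^js-m)\big)\Ma{ds}=\I(g_j)$ where $g_j:=\sum_{m=0}^{2^j-1}h(2^j\bullet-m)$; but this function has $|g_j(s)|\le \ind_{[0,1)}(s)$ (the blocks have disjoint supports and each $|h(2^js-m)|\le 1$ there), hence $\|g_j\|_\al\le\kappa_1(\,\|g_j\|_{\undal}+\|g_j\|_{\oval}\,)\le 2\kappa_1$ by (\ref{eq:inegFa}), uniformly in $j$. Plugging this into the moment estimate (\ref{eq:taildist2}) yields a bound $\ESP\big(\sup_{0\le k<2^j}|\tau_{j,k}|^\ga\big)\le C$ with $C$ independent of $j$.

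Next I would convert this uniform moment bound into a summable tail estimate. Fix $\zeta>1/\undal$. Since we may choose $\ga$ as close to $\undal$ as we wish, we can arrange $\zeta\ga>1$ (indeed $\zeta\undal>1$, so $\zeta\ga>1$ for $\ga$ near $\undal$); then by Markov's inequality,
\begin{equation}
\PR\Big((1+j)^{-\zeta}\sup_{0\le k<2^j}|\tau_{j,k}|>\lambda\Big)\le \frac{C}{\lambda^\ga (1+j)^{\zeta\ga}},
\end{equation}
and $\sum_{j\ge 0}(1+j)^{-\zeta\ga}<+\infty$. By Borel--Cantelli, for each fixed rational $\lambda$ there is an almost sure event on which $(1+j)^{-\zeta}\sup_{0\le k<2^j}|\tau_{j,k}|\le\lambda$ for all $j$ large enough; intersecting over a countable family (and also over a countable set of admissible $\zeta$, e.g. $\zeta=1/\undal+1/n$, $n\in\N$) and taking the finitely many small-$j$ terms into account gives a single event $\O^*$ of probability~$1$ on which (\ref{lem:bou-taujk:eq1}) holds for every admissible $\zeta$ simultaneously. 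Note that this is consistent with the event named $\O^*$ in Theorem~\ref{theo:main}, which is exactly the one produced here.

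The main obstacle I anticipate is not the martingale step itself but making the uniform-in-$j$ bound on $\|g_j\|_\al$ fully rigorous: one must be careful that, although each summand $h(2^j\bullet-k)$ has quasi-norm of order $2^{-j/\oval}$ or so, the \emph{sum} over the $2^j$ disjointly supported blocks does not blow up, and this is precisely where the pointwise bound $|g_j|\le\ind_{[0,1)}$ together with inequality~(\ref{eq:inegFa}) does the work, sidestepping any triangle-inequality loss from the quasi-norm constant $c'$. A secondary point requiring care is the quantifier order in building $\O^*$: the event must be chosen once and for all, independently of $\zeta$, which is why one fixes a countable dense family of $\zeta$'s decreasing to $1/\undal$ and uses monotonicity of the condition in $\zeta$ to cover all $\zeta>1/\undal$. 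Everything else — the explicit Doob constant, the value of $\kappa_3(\ga)$ — is routine and can be absorbed into $C$.
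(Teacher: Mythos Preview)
Your proposal is correct and follows essentially the same route as the paper: Doob's maximal inequality applied to the martingale $\{\tau_{j,k}\}_{0\le k<2^j}$, a uniform-in-$j$ moment bound on $\tau_{j,2^j-1}$, and Borel--Cantelli summed over $j$, followed by an intersection over a countable family of $\zeta$'s. The only cosmetic differences are that the paper uses the weak-type (tail) version of Doob's inequality for the submartingale $|\tau_{j,k}|^\gamma$ rather than the $L^\gamma$ version plus Markov, and that the paper observes directly that $\int_\R |g_j(s)|^{\al(s)}ds=1$, so $\|g_j\|_\al=1$ by the very definition~(\ref{eq:def-normF}) --- this is slightly cleaner than your route through~(\ref{eq:inegFa}), and also explains why the ``main obstacle'' you anticipated is not one.
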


\begin{proof}[Proof of Lemma~\ref{lem:bou-taujk}] Let $\zeta>1/\undal$ and $\gamma\in [1, \undal)$ be fixed and such that 
\begin{equation}
\label{lem:bou-taujk:eq2}
\zeta>1/\gamma>1/\undal\,.
\end{equation}
Observe $z\mapsto |z|^\gamma$ is a convex function from $\R$ to $\R_+$, and one has $\ESP \big (|\tau_{j,k}|^\gamma\big)<+\infty$, for all $j$ and $k$ (see the last equality in (\ref{def:taujk}), (\ref{eq:taildist2}) and (\ref{A:eq:haar})). Thus, it follows Remark~\ref{rem:mart} and from \cite[Theorem~10.3.3 on page 354]{Dud03}, that, for each fixed $j\in\N$, the sequence of random variables $\big\{|\tau_{j,k}|^{\gamma}\big\}_{0\le k <2^j}$ is a discrete submartingale with respect to the filtration $(\A_{j,k})_{0\le k <2^j-1}$. Hence, using the Doob's maximal inequality (see \cite[Theorem 10.4.2 on page 360]{Dud03})  one has, for all positive real number $M$, 
\begin{equation}
\label{lem:bou-taujk:eq3}
\PR\Big (\sup_{0\le k < 2^j} |\tau_{j,k}|^\gamma >M\Big)\le M^{-1}\ESP\big (|\tau_{j,2^j-1}|^\gamma\big)\,.
\end{equation}
Observe that it follows from the last equality in (\ref{def:taujk}), (\ref{eq:taildist2}) and the fact that 
\[
\int_{\R} \Big |\sum_{m=0}^{2^j-1} h(2^j s-m)\Big |^{\al (s)} ds=1\,,
\] 
that 
\begin{equation}
\label{lem:bou-taujk:eq4}
\ESP\big (|\tau_{j,2^j-1}|^\gamma\big)\le\kappa_3 (\gamma),\quad\mbox{for all $j\in\Z_+$,}
\end{equation}
where $\kappa_3 (\gamma)$ is the same finite constant as in (\ref{eq:taildist2}). Next, taking in 
(\ref{lem:bou-taujk:eq3}) $M=(1+j)^{\gamma \zeta}$ and using (\ref{lem:bou-taujk:eq4}) and (\ref{lem:bou-taujk:eq2}), one obtains that
\[
\sum_{j=1}^{+\infty} \PR\Big (\sup_{0\le k < 2^j} |\tau_{j,k}| >(1+j)^{\zeta}\Big)\le \kappa_3 (\gamma) 
\sum_{j=1}^{+\infty} (1+j)^{-\gamma \zeta}<+\infty.
\]
Thus, it follows from the Borel-Cantelli's Lemma that the probability of the event 
\[
\O_\zeta^*:=\bigg\{\o\in\O\,:\,\sup_{j\in\Z_+}\Big\{(1+j)^{-\zeta}\, \sup_{0\le k < 2^j} |\tau_{j,k}(\o)|\Big\}<+\infty\bigg\}
\]
is equal to $1$. For finishing the proof, one sets 
\[
\O^*:=\bigcap_{\zeta\in \Q\cap (1/\undal\,,+\infty)} \O_\zeta^*\,
\]
where $\Q$ denotes the countable set of the rational numbers.
\end{proof}

In order to derive Theorem~\ref{theo:main}, one also needs the following five lemmas whose proofs are given in the Appendix \ref{app}. From now on, for the sake of simplicity one denotes by $I$ the interval $[0,1]$.




\begin{lemma}\label{lem:useful:2}
There exists a positive and finite constant $c_1$ such that, for any $j\in\N$, for each $(u,v)\in I\times [a,b]$ satisfying $u\ge 4.2^{-j-1}$, and for all $s\in [0,u-4.2^{-j-1}]$, the following inequality holds
\begin{align}
\label{lem:useful:2:eq13}
& \left| K_{u,v}(s)-K_{u,v}(s+2^{-j-1})-K_{u,v}(s+2.2^{-j-1})+K_{u,v}(s+3.2^{-j-1}) \right| \nonumber \\
& \le c_1 2^{-j} \big ( 2^{-j\rho_{\al}} + 2^{-j} |u-s-3.2^{-j-1}|^{a-1/\undal-2}\big)\,.
\end{align}
\end{lemma}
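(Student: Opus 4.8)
The plan is to prove the fourth-difference estimate \eqref{lem:useful:2:eq13} by recognizing the left-hand side as a discrete second-order difference (a ``second difference of second differences'') of the function $K_{u,v}$, and then controlling it via Taylor expansion of the smooth parts and careful bookkeeping of the singularity at $s=u$. Write $\Delta := 2^{-j-1}$ and denote by $g(s) := K_{u,v}(s) = (u-s)^{v-1/\alpha(s)}$ on the relevant range; since $u \ge 4\Delta$ and $s \in [0, u-4\Delta]$, all four arguments $s, s+\Delta, s+2\Delta, s+3\Delta$ lie in $[0,u)$, so on this interval $g$ is genuinely of the form $(u-\cdot)^{v-1/\alpha(\cdot)}$ with no indicator issues, and moreover $u - s - 3\Delta \ge \Delta > 0$, so we stay a fixed distance $\Delta$ away from the singularity. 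The quantity to be estimated is
\[
(g(s) - g(s+\Delta)) - (g(s+2\Delta) - g(s+3\Delta)),
\]
which, pairing terms differently, equals the second difference $g(s) - 2g(s+2\Delta) + g(s+3\Delta) - \big(g(s+\Delta) - g(s+2\Delta)\big)$; the cleanest route is to write it as $\int_0^\Delta \int_0^\Delta g''(s + \sigma + 2\tau)\, d\sigma\, d\tau$ up to a sign, i.e. as an iterated integral of the second derivative over a box of side $\Delta$ shifted by $2\Delta$, so that the whole expression is bounded by $\Delta^2 \sup |g''|$ over an interval contained in $[s, u - \Delta]$.

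The next step is therefore to estimate $g''(\sigma)$ for $\sigma \in [s, u-\Delta]$, where $g(\sigma) = \exp\big((v - 1/\alpha(\sigma))\log(u-\sigma)\big)$. Differentiating twice produces three types of terms: (i) the ``pure power'' term $(v-1/\alpha(\sigma))(v-1/\alpha(\sigma)-1)(u-\sigma)^{v-1/\alpha(\sigma)-2}$, which is the dominant singular contribution and, since $v \ge a$ and $\alpha(\sigma) \ge \undal$ so $v - 1/\alpha(\sigma) \ge a - 1/\undal > 0$, is bounded in absolute value by a constant times $(u-\sigma)^{a - 1/\undal - 2}$ (using also $v - 1/\alpha(\sigma) \le b < 1$ to bound the bracketed coefficient); (ii) terms involving $\alpha'(\sigma)$ multiplied by $(u-\sigma)^{v-1/\alpha(\sigma)-1}\log(u-\sigma)$ and by $(u-\sigma)^{v-1/\alpha(\sigma)}(\log(u-\sigma))^2$, which are controlled using that $\alpha'$ is bounded (it is Hölder continuous on $[0,1]$) and that $|\log(u-\sigma)|^p (u-\sigma)^{\delta}$ is bounded for any $p$ and any $\delta > 0$; and (iii) a term involving $\alpha''(\sigma)$ — but since $\alpha(\cdot) \in \ce^{1+\rho_\alpha}$ only, $\alpha''$ need not exist, so I would instead keep this contribution in difference form rather than differentiating twice, which is the technical subtlety below. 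Collecting (i)–(ii), and noting $(u-\sigma)^{a-1/\undal-2} \le (u - s - 3\Delta)^{a-1/\undal - 2} \cdot (\text{bounded factor})$ for $\sigma$ in the box (since $u - \sigma \ge u - s - 3\Delta$ and the exponent is negative, while the ratio $(u-\sigma)/(u-s-3\Delta)$ stays bounded because $\sigma \le s + 3\Delta$), one arrives at $|g''(\sigma)| \le c\big(1 + |u - s - 3\Delta|^{a - 1/\undal - 2}\big)$ on the box, and multiplying by $\Delta^2 = 2^{-2j}/4$ gives exactly the right-hand side shape $c_1 2^{-j}\big(2^{-j\rho_\alpha} + 2^{-j}|u-s-3\Delta|^{a-1/\undal-2}\big)$ — note the target has a $2^{-j}$ ($=2\Delta$), not $2^{-2j}$, multiplying the first term, which signals that the $2^{-j\rho_\alpha}$ term should come not from two clean derivatives but from a \emph{first}-difference estimate applied to the $\alpha'$-dependent part, exploiting the $\rho_\alpha$-Hölder regularity of $\alpha'$: one difference in $\alpha'$ costs $\Delta^{\rho_\alpha}$, the remaining differences cost $\Delta$ each, total $\Delta^{2}\Delta^{\rho_\alpha - 1}$? — this must be reconciled so that the bookkeeping matches $2^{-j}2^{-j\rho_\alpha}$.

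The main obstacle is precisely this last reconciliation: the function $\alpha(\cdot)$ has only $\ce^{1+\rho_\alpha}$ regularity, so one cannot naively bound a fourth (or even a crude second) difference of $g$ by $\Delta^2 \sup|g''|$ for the part of $g$ whose smoothness is inherited from $\alpha$; instead I would split $g = g_1 \cdot g_2$ or handle additively by writing $(u-\sigma)^{v-1/\alpha(\sigma)} = (u-\sigma)^{v-1/\alpha(s)} \cdot (u-\sigma)^{1/\alpha(s) - 1/\alpha(\sigma)}$, treating the first factor as a genuine power function (smooth, two derivatives available, contributing the singular term $|u-s-3\Delta|^{a-1/\undal-2}$) and expanding the exponent $1/\alpha(s) - 1/\alpha(\sigma)$ of the second factor to first order with $\rho_\alpha$-Hölder remainder. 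The fourth difference of the product then distributes, by a discrete Leibniz rule, into pieces each containing at least one difference of the ``rough'' factor — which by the mean value theorem brings out $\alpha'$ evaluated at nearby points, and the \emph{difference} of two such $\alpha'$ values is $O(\Delta^{\rho_\alpha})$ — and enough differences of the smooth factor to supply the remaining powers of $\Delta$ and, where needed, the negative power of $|u-s-3\Delta|$. Getting the exponent of $\Delta$ and of $|u-s-3\Delta|$ to line up simultaneously in every such piece, uniformly in $(u,v) \in I\times[a,b]$ and $j$, while keeping the logarithmic factors harmless (absorbed into constants at the cost of shrinking the exponent $a - 1/\undal - 2$ infinitesimally, or via $|\log x|\lesssim_\epsilon x^{-\epsilon}$ with $\epsilon$ small enough that $a - 1/\undal - 2 - \epsilon$ still does the job after comparison with $u - s - 3\Delta \ge \Delta$), is the bulk of the work; everything else is routine calculus on $[0,1]$.
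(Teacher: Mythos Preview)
Your plan correctly isolates the central difficulty—that $\alpha\in\ce^{1+\rho_\alpha}$ forbids a clean second derivative of $g$—and the multiplicative splitting $g(\sigma)=(u-\sigma)^{v-1/\alpha(s)}\cdot(u-\sigma)^{1/\alpha(s)-1/\alpha(\sigma)}$ is a sensible way to separate the singular power from the $\alpha$-regularity part. But what you have written is a strategy, not a proof: the discrete Leibniz expansion of the fourth difference of a product produces four cross terms, and you explicitly leave the matching of powers of $\Delta$ and of $|u-s-3\Delta|$ undone (``this must be reconciled'', ``is the bulk of the work''). In particular, the second factor $Q(\sigma)=(u-\sigma)^{1/\alpha(s)-1/\alpha(\sigma)}$ releases a $\log(u-\sigma)$ every time it is differenced, and the $\rho_\alpha$-H\"older constant of $Q'$ on the box carries a factor $|\log(u-s-3\Delta)|$; you must check that each such logarithm is paired with a positive power of $u-s-3\Delta$ (coming from the $P$-factor) so that it can be absorbed via $|\log x|\le x^{-1}$—your fallback of ``shrinking the exponent infinitesimally'' is not available, since the lemma states the exact power $a-1/\undal-2$. (That exponent is in fact already chosen with slack to swallow logs this way, so your approach can be completed; but the verification is the whole content of the lemma and you have not done it.)

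The paper avoids the product rule by an \emph{additive} splitting. Writing $L_{u,v}(x,y)=(u-x)^{v-1/y}$, it replaces $\alpha(s+2\Delta)$ and $\alpha(s+3\Delta)$ in the last two terms by $\alpha(s)$ and $\alpha(s+\Delta)$, calls the resulting four-term expression $A$ and the correction $B$. Then $A=g_1(2\Delta)-g_1(0)$ with $g_1(x)=L(s+x,\alpha(s))-L(s+x+\Delta,\alpha(s+\Delta))$, a genuinely $C^\infty$ function of $x$, so one mean value theorem in $x$ followed by two more (one in $x$, one in $y$) gives $A\lesssim 2^{-2j}(u-s-3\Delta)^{a-1/\undal-2}$ directly. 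The correction $B$ compares values of $L$ at the \emph{same} $x$ but at different $y$'s; a mean value theorem in $y$ pulls out increments of $\alpha$ over steps $2\Delta$, and differencing two such expressions produces an $|\alpha'(y_*)-\alpha'(y_{**})|\lesssim\Delta^{\rho_\alpha}$ term—this is exactly where $2^{-j(1+\rho_\alpha)}$ appears—together with smooth remainders handled by further one-variable MVT's; logs from $\partial_y L$ are then killed by $|\log x|\le x^{-1}$. This decomposition isolates the two contributions to the target bound in two separate pieces rather than spreading them across four Leibniz cross terms, and is correspondingly shorter to execute.
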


\begin{lemma}\label{prop1:maj:wjk}
There exists a positive and finite constant $c_2$ such that, for any $j\in\Z_+$ and $(u,v) \in I\times [a,b]$, the following inequality, in which $[2^ju]$ denotes the integer part of $2^ju$, is satisfied
\begin{equation}\label{result:prop1:maj:wjk}
\left| w_{j,[2^ju]}(u,v) \right| \le c_2 2^{-j\left(a-\frac{1}{\undal}\right)}.
\end{equation}
\end{lemma}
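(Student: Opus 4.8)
The plan is to estimate $w_{j,[2^ju]}(u,v)$ directly from its definition \eqref{eq:def-wjk}, treating separately the case where the support of $h(2^j\bullet-[2^ju])$ meets the ``singular'' part of $K_{u,v}$ near $s=u$ and the case where it does not. Write $k=[2^ju]$ and note that the support of $h(2^js-k)$ is the dyadic interval $[2^{-j}k,2^{-j}(k+1))$, which contains $u$ (more precisely $u\in[2^{-j}k,2^{-j}(k+1))$ by the choice $k=[2^ju]$). So the relevant integral always reaches the endpoint $s=u$ where $K_{u,v}$ has its singularity (when the exponent $v-1/\alpha(s)$ is negative, which may happen since $v$ can be below $1/\alpha(s)$). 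Using $|h|\le 1$ together with the bound $0\le K_{u,v}(s)\le(u-s)_+^{v-1/\alpha(s)}$, I would estimate
\[
\left| w_{j,k}(u,v)\right| \le 2^j\int_{2^{-j}k}^{\min\{2^{-j}(k+1),u\}} (u-s)^{v-1/\alpha(s)}\,ds .
\]
Since $v\ge a$ and $1/\alpha(s)\le 1/\undal$, the exponent satisfies $v-1/\alpha(s)\ge a-1/\undal>-1$, so after the substitution $t=u-s$ the integral $\int_0^{\ell}t^{v-1/\alpha(u-t)}\,dt$ over an interval of length $\ell\le 2^{-j}$ is controlled: bounding $t^{v-1/\alpha(u-t)}\le \max\{t^{a-1/\undal},t^{\oval-1/\undal}\}$ and using $t\le 2^{-j}\le 1$ (so the smallest relevant exponent dominates for small $t$), one gets $\int_0^\ell t^{v-1/\alpha(u-t)}dt\lesssim \ell^{\,a-1/\undal+1}\le 2^{-j(a-1/\undal+1)}$ up to a constant depending only on $\undal,\oval,a$. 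Multiplying by the factor $2^j$ in front yields exactly the claimed bound $c_2\,2^{-j(a-1/\undal)}$.

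More carefully, I would split according to whether $u-2^{-j}k$ is small or of order $2^{-j}$, but in fact the single estimate above already suffices: the only thing that matters is that the length of the integration interval is at most $2^{-j}$ and that the exponent stays $\ge a-1/\undal>-1$ uniformly. One subtlety is that $t^{v-1/\alpha(u-t)}$ may be large near $t=0$ when the exponent is negative; this is handled by $\int_0^{2^{-j}}t^{\beta}\,dt=\frac{2^{-j(\beta+1)}}{\beta+1}$, finite precisely because $\beta=v-1/\alpha(u-t)\ge a-1/\undal>-1$, and bounded by $\frac{2^{-j(a-1/\undal+1)}}{a-1/\undal}$ since $t\le 1$ forces $t^{\beta}\le t^{a-1/\undal}$ when $\beta\le a-1/\undal$ is impossible—rather, for $t\in(0,1]$ one has $t^{\beta}\le t^{a-1/\undal}$ whenever $\beta\ge a-1/\undal$, which is our case, so the worst (largest) integrand is $t^{a-1/\undal}$ and integration gives the stated power. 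The constant $c_2$ then depends only on $\undal$, $\oval$ and $a$ (equivalently on $a$, $b$, $\undal$, $\oval$), not on $j$, $u$, or $v$.

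I do not expect a serious obstacle here; the lemma is a clean scaling estimate. The one point requiring a little care is the uniformity of the exponent bound: one must use both $v\ge a$ (giving the lower bound $v-1/\alpha(s)\ge a-1/\undal$) to control the singularity, and also observe that $u-2^{-j}k\le 2^{-j}$ because $k=[2^ju]$, so that the integration interval $[2^{-j}k,\min\{2^{-j}(k+1),u\}]$ has length at most $2^{-j}$; both facts are immediate from the hypotheses. A secondary point is to make sure the integral is genuinely over an interval of length $\le 2^{-j}$ and not the full dyadic block—this is automatic because $K_{u,v}(s)=0$ for $s\ge u$, so the effective upper limit is $\min\{2^{-j}(k+1),u\}$, and this truncation only helps.
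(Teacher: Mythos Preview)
Your argument is essentially the same as the paper's: bound $|h|\le 1$, restrict the integral to $[2^{-j}k,u]$ because $K_{u,v}(s)=0$ for $s\ge u$, use that $v-1/\alpha(s)\ge a-1/\undal$ and $u-s\le 2^{-j}\le 1$ to bound the integrand by $(2^{-j})^{a-1/\undal}$, and then multiply by the length $\le 2^{-j}$ of the interval and the prefactor $2^j$. The paper does exactly this in one line, getting $c_2=1$.

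The one point where your write-up goes astray is the discussion of a possible singularity: you say the exponent $v-1/\alpha(s)$ ``is negative, which may happen since $v$ can be below $1/\alpha(s)$.'' Under the standing hypotheses this never happens: $v\ge a>1/\undal\ge 1/\alpha(s)$ for every $s$, so the exponent is strictly positive throughout, the integrand is bounded (by~$1$, in fact, in view of \eqref{eq:ineg-kuv}), and there is no singular behavior near $s=u$ at all. Your later sentence ``for $t\in(0,1]$ one has $t^{\beta}\le t^{a-1/\undal}$ whenever $\beta\ge a-1/\undal$, which is our case'' is the correct observation and is all that is needed; the surrounding talk of a subtlety and the back-and-forth about whether $\beta$ can dip below $a-1/\undal$ should simply be deleted. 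Once you drop that, your proof matches the paper's.
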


\begin{lemma}\label{lem:maj:I1}
There exists a positive and finite constant $c_3$ such that, for any $j\in\Z_+$ and $(u,v)\in I\times [a,b]$, one has
\begin{align}\label{result:lem:maj:I1}
I_j^1(u,v)& :=2^{j} \int_{u-2\cdot 2^{-(j+1)}}^{u-2^{-(j+1)}} \bigg |(u-s)^{v-\frac{1}{\al(s)}} - (u-s-2^{-j-1)})^{v-\frac{1}{\al(s+2^{-j-1})}} \bigg | ds \le c_3 2^{-j\left(a-\frac{1}{\undal}\right)},
\end{align}
with the convention that $I_j^1(u,v):=0$ when $u\le 2^{-(j+1)}$.
\end{lemma}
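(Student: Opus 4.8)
The plan is to estimate $I_j^1(u,v)$ by splitting the integrand into two pieces corresponding to the two sources of variation of $K_{u,v}$ along the interval of integration: the variation of the exponent $v-\tfrac{1}{\al(s)}$ with $s$, and the variation of the base $(u-s)$ with $s$. Concretely, I would write, for $s\in[u-2\cdot 2^{-(j+1)},u-2^{-(j+1)}]$,
\[
(u-s)^{v-\frac{1}{\al(s)}}-(u-s-2^{-j-1})^{v-\frac{1}{\al(s+2^{-j-1})}}
= A(s)+B(s),
\]
where $A(s):=(u-s)^{v-\frac{1}{\al(s)}}-(u-s)^{v-\frac{1}{\al(s+2^{-j-1})}}$ isolates the exponent change, and $B(s):=(u-s)^{v-\frac{1}{\al(s+2^{-j-1})}}-(u-s-2^{-j-1})^{v-\frac{1}{\al(s+2^{-j-1})}}$ isolates the base change. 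On this interval one has $2^{-j-1}\le u-s\le 2^{-j}$, so both $u-s$ and $u-s-2^{-j-1}$ are comparable to $2^{-j}$ (the latter is nonnegative, possibly zero at the left endpoint, but its exponent $v-\tfrac1{\al(s+2^{-j-1})}>0$ keeps it harmless).

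For the term $A(s)$: using $x^p - x^q = x^q(x^{p-q}-1)$ with $x=u-s$, the exponent difference is $|p-q|=\big|\tfrac{1}{\al(s)}-\tfrac{1}{\al(s+2^{-j-1})}\big|\le c\,2^{-j}$ by the $\ce^{1+\rho_\al}$ (in particular Lipschitz) regularity of $\al(\cdot)$ and \eqref{eq:condal}; since $x\asymp 2^{-j}$ stays in a compact subset of $(0,\infty)$ (after noting $x\le 1$), one has $|x^{p-q}-1|\le C|p-q||\log x|\le C' 2^{-j}(1+j)$ there, and $x^q\le (2^{-j})^{a-1/\undal}$ by \eqref{eq:condal} and $v\ge a$. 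Hence $|A(s)|\lesssim 2^{-j}(1+j)\,2^{-j(a-1/\undal)}$, and multiplying by $2^j$ and integrating over an interval of length $2^{-j-1}$ gives a contribution $\lesssim 2^{-j}(1+j)\,2^{-j(a-1/\undal)}$, which is $\lesssim 2^{-j(a-1/\undal)}$. For the term $B(s)$: by the mean value theorem applied to $t\mapsto t^{\,q}$ with $q=v-\tfrac1{\al(s+2^{-j-1})}\in(0,1)$, one gets $|B(s)|\le 2^{-j-1}\,q\,\xi^{\,q-1}$ for some $\xi\in[u-s-2^{-j-1},u-s]$; since $q-1<0$ and $\xi\le u-s\le 2^{-j}$ we cannot naively use $\xi^{q-1}\le (2^{-j})^{q-1}$ pointwise near the left endpoint, so instead I integrate: $2^j\int_{u-2\cdot2^{-(j+1)}}^{u-2^{-(j+1)}}\xi^{q-1}\,ds$ is bounded, after the change of variable $r=u-s$, by $2^j\int_0^{2^{-j}} r^{q-1}dr = 2^j\,\tfrac{1}{q}(2^{-j})^{q}\lesssim 2^{-j(a-1/\undal)}$ uniformly (here $1/q$ is bounded since $q\ge a-1/\oval>0$). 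This yields the claimed bound for the $B$ contribution as well.

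The main obstacle is the non-integrable-looking behaviour of the base-change term $B(s)$ at the left endpoint $s=u-2\cdot 2^{-(j+1)}$, where $u-s-2^{-j-1}\to 0$ and the exponent $q-1$ is negative; the resolution is to keep the $2^j$ prefactor and the interval of integration together, so that the singularity $r^{q-1}$ is integrated against $dr$ on $[0,2^{-j}]$ and the $2^j$ exactly compensates the shrinking interval, producing a clean power $2^{-jq}\le 2^{-j(a-1/\undal)}$. A secondary technical point is controlling $|x^{p-q}-1|$ uniformly in $j$ in the term $A(s)$: one must observe $u-s\le 1$ so $|\log(u-s)|\lesssim 1+j$, and absorb the resulting harmless factor $(1+j)$ into the estimate (it is dominated, e.g., by enlarging the constant and noting $2^{-j}(1+j)\le C$). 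Assembling $|A(s)|+|B(s)|$ and using that $2^{-j-1}\le u-s$ throughout finishes the proof with $c_3$ depending only on $\undal,\oval,a,b$ and the Hölder constant of $\al(\cdot)$.
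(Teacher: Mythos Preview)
Your approach is correct in outcome but considerably more elaborate than the paper's. The paper does not decompose the integrand at all: on the interval of integration $s\in[u-2\cdot 2^{-(j+1)},\,u-2^{-(j+1)}]$ one has $0\le u-s-2^{-j-1}\le u-s\le 2\cdot 2^{-(j+1)}$, and both exponents $v-1/\al(\cdot)$ lie in $[a-1/\undal,\,b-1/\oval]\subset(0,1)$, so each of the two terms inside the absolute value is individually between $0$ and a constant times $2^{-(j+1)(a-1/\undal)}$. The triangle inequality then bounds the integrand pointwise by $C\,2^{-j(a-1/\undal)}$, and multiplying by $2^j$ times the interval length $2^{-j-1}$ finishes the proof in one line (this is exactly the argument the paper writes out for Lemma~\ref{lem:maj:I3} and declares ``very similar'' for Lemmas~\ref{lem:maj:I1} and~\ref{lem:maj:I2}). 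In short, on these boundary intervals there is no cancellation to exploit; your mean-value machinery and the Lipschitz regularity of $\al$ are not needed here---they are reserved, in the paper, for Lemma~\ref{lem:useful:2}, which handles the bulk range $s\le u-4\cdot 2^{-j-1}$.

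There is also a small slip in your handling of $B(s)$. From the mean value theorem you obtain $\xi\in[u-s-2^{-j-1},\,u-s]$ with $q-1<0$, so the valid pointwise bound is $\xi^{q-1}\le(u-s-2^{-j-1})^{q-1}$, using the \emph{lower} end of the interval for $\xi$; the appropriate substitution is then $r=u-s-2^{-j-1}$, which runs over $[0,2^{-j-1}]$. Your stated change $r=u-s$ gives $r\in[2^{-j-1},2^{-j}]$ and does not dominate $\xi^{q-1}$. With the correct substitution your computation goes through as intended; alternatively, the elementary inequality $t_1^{\,q}-t_2^{\,q}\le(t_1-t_2)^q$ for $0<q<1$ and $0\le t_2\le t_1$ yields $|B(s)|\le(2^{-j-1})^{q}\le 2^{-j(a-1/\undal)}$ directly.
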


\begin{lemma}\label{lem:maj:I2}
There exists a positive and finite constant $c_4$ such that, for any $j\in\Z_+$ and $(u,v)\in I\times [a,b]$, one has
\begin{align}\label{result:lem:maj:I2}
I_j^2(u,v)& :=2^{j} \int_{u-3 \cdot 2^{-(j+1)}}^{u-2\cdot 2^{-(j+1)}} \bigg |(u-s)^{v-\frac{1}{\al(s)}} - (u-s-2^{-(j+1)})^{v-\frac{1}{\al(s+2^{-j-1})}}-(u-s-2\cdot 2^{-(j+1)})^{v-\frac{1}{\al(s+2\cdot 2^{-j-1})}} \bigg | ds \nonumber \\
&\le c_4 2^{-j\left(a-\frac{1}{\undal}\right)},
\end{align}
with the convention that $I_j^2(u,v):=0$ when $u\le 2\cdot 2^{-(j+1)}$.
\end{lemma}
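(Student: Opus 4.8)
The plan is to bound $I_j^2(u,v)$ by a direct triangle-inequality estimate, exploiting the fact that the integration variable $s$ ranges only over an interval of length $2^{-(j+1)}$ lying within $3\cdot 2^{-(j+1)}$ of the right endpoint $u$ of the support of $K_{u,v}$. Near that endpoint each of the three terms in the integrand is individually of size of order $2^{-j(a-1/\undal)}$, so --- unlike in the range treated by Lemma~\ref{lem:useful:2}, where $u-s$ may be of order~$1$ and one must exploit cancellation between the four terms of the second difference --- no cancellation is needed here.

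First I would dispose of the degenerate case $u\le 2\cdot 2^{-(j+1)}$, where $I_j^2(u,v)=0$ by convention; since $u\in I=[0,1]$ and $2\cdot 2^{-(j+1)}=1$ for $j=0$, this also settles $j=0$. So assume $j\ge 1$ and $u>2\cdot 2^{-(j+1)}$. For $s\in[u-3\cdot 2^{-(j+1)},\,u-2\cdot 2^{-(j+1)}]$ one has $0<u-s\le 3\cdot 2^{-(j+1)}\le\frac{3}{4}$, $0<u-s-2^{-(j+1)}\le 2\cdot 2^{-(j+1)}$ and $0\le u-s-2\cdot 2^{-(j+1)}\le 2^{-(j+1)}$; hence every base occurring in the integrand of (\ref{result:lem:maj:I2}) belongs to $[0,3\cdot 2^{-(j+1)}]\subset[0,1]$, while by (\ref{eq:condal}) and $v\ge a$ every exponent $v-1/\alpha(\cdot)$ is $\ge a-1/\undal>0$.

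I would then use that, for $x\in[0,1]$ and $p\ge q>0$, one has $x^{p}\le x^{q}$ (since $x^{p}=x^{q}x^{p-q}$ with $x^{p-q}\le 1$). Taking $q=a-1/\undal$ and combining with $x\le 3\cdot 2^{-(j+1)}$ bounds each of the three terms by $(3\cdot 2^{-(j+1)})^{a-1/\undal}$, so the integrand is $\le 3\,(3\cdot 2^{-(j+1)})^{a-1/\undal}$ by the triangle inequality. Multiplying by the length $2^{-(j+1)}$ of the interval and by the prefactor $2^{j}$ gives $I_j^2(u,v)\le \frac{3}{2}\,3^{\,a-1/\undal}\,2^{-(j+1)(a-1/\undal)}\le \frac{9}{2}\,2^{-j(a-1/\undal)}$, which is the asserted bound (one may take $c_4=9/2$). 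There is no real obstacle: the only point requiring care is the lower bound $v-1/\alpha(\cdot)\ge a-1/\undal>0$ on the exponents, which is precisely where the hypothesis $(u,v)\in I\times[a,b]$ with $a>1/\undal$ is used, and it is exactly what makes this crude term-by-term estimate legitimate.
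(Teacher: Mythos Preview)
Your proof is correct and follows essentially the same approach as the paper: the paper states that the proofs of Lemmas~\ref{lem:maj:I1}--\ref{lem:maj:I3} are all similar and only writes out the one for Lemma~\ref{lem:maj:I3}, which proceeds exactly by bounding each of the power terms individually by a constant times $2^{-(j+1)(a-1/\undal)}$ and then applying the triangle inequality, just as you do. Your separate handling of $j=0$ is harmless but unnecessary, since the convention already forces $I_0^2(u,v)=0$ for all $u\in[0,1]$.
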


\begin{lemma}\label{lem:maj:I3}
There exists a positive and finite constant $c_5$ such that, for any $j\in\Z_+$ and $(u,v)\in I\times [a,b]$, one has
\begin{align}\label{result:lem:maj:I3}
I_j^3(u,v)& :=2^{j} \int_{u-4 \cdot 2^{-(j+1)}}^{u-3\cdot 2^{-(j+1)}} \bigg |(u-s)^{v-\frac{1}{\al(s)}} - (u-s-2^{-(j+1)})^{v-\frac{1}{\al(s+2^{-j-1})}}\nonumber \\
& \qquad\qquad \qquad\qquad\qquad- (u-s-2\cdot 2^{-(j+1)})^{v-\frac{1}{\al(s+2\cdot 2^{-j-1})}} + (u-s-3\cdot 2^{-(j+1)})^{v-\frac{1}{\al(s+3\cdot 2^{-j-1})}} \bigg | ds \nonumber \\
&\le c_5 2^{-j\left(a-\frac{1}{\undal}\right)},
\end{align}
with the convention that $I_j^3(u,v):=0$ when $u\le 3\cdot 2^{-(j+1)}$.
\end{lemma}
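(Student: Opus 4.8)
\emph{Proof proposal for Lemma~\ref{lem:maj:I3}.}
The plan is to bound the integrand \emph{pointwise}, term by term, without invoking any cancellation --- in deliberate contrast with Lemma~\ref{lem:useful:2}. The point is that the refined estimate of Lemma~\ref{lem:useful:2}, which contains the factor $|u-s-3\cdot 2^{-j-1}|^{a-1/\undal-2}$, is useless on the range of integration of $I_j^3$: there the quantity $u-s-3\cdot 2^{-j-1}$ runs over $[0,2^{-j-1}]$, and since $a-1/\undal-2<-1$ the resulting integral diverges at $0$. What saves the day instead is that the slice $[u-4\cdot 2^{-(j+1)},\,u-3\cdot 2^{-(j+1)}]$ has length $2^{-j-1}$, which exactly compensates the prefactor $2^j$, so a crude size bound on the integrand already yields the claimed rate.

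First I would dispose of the trivial case: if $u\le 3\cdot 2^{-(j+1)}$ then $I_j^3(u,v)=0$ by convention and there is nothing to prove. So assume $u>3\cdot 2^{-(j+1)}$; since also $u\le 1$, this forces $2^{j+1}>3$, hence $j\ge 1$ and $4\cdot 2^{-(j+1)}=2^{-j+1}\le 1$. Consequently, for every $s$ in the interval of integration, each of the four bases $u-s$, $u-s-2^{-(j+1)}$, $u-s-2\cdot 2^{-(j+1)}$, $u-s-3\cdot 2^{-(j+1)}$ lies in $[0,2^{-j+1}]\subseteq[0,1]$, so all four power functions appearing in the integrand are well defined.

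Next I would use (\ref{eq:condal}) together with $v\ge a$: for each of the four arguments $t$ (namely $s$, $s+2^{-j-1}$, $s+2\cdot 2^{-j-1}$, $s+3\cdot 2^{-j-1}$) one has $v-1/\al(t)\ge a-1/\undal$, and since $1/\undal<a<1$ this exponent lies in $(0,1)$. Because $0\le x\le 1$ and $p\ge a-1/\undal>0$ imply $x^{p}\le x^{a-1/\undal}\le(2^{-j+1})^{a-1/\undal}=2^{a-1/\undal}2^{-j(a-1/\undal)}\le 2\cdot 2^{-j(a-1/\undal)}$, each of the four power functions in the integrand of $I_j^3(u,v)$ is bounded on the whole range of integration by $2\cdot 2^{-j(a-1/\undal)}$; in particular, at the endpoint where $u-s-3\cdot 2^{-(j+1)}=0$ the last power function vanishes since its exponent is positive. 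By the triangle inequality the integrand is therefore at most $8\cdot 2^{-j(a-1/\undal)}$, and integrating over an interval of length $2^{-(j+1)}$ and multiplying by $2^j$ gives $I_j^3(u,v)\le 4\cdot 2^{-j(a-1/\undal)}$, so $c_5=4$ works.

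There is essentially no serious obstacle: the only things to watch are recognizing that on this boundary slice the sharp estimate of Lemma~\ref{lem:useful:2} must be abandoned in favour of a crude termwise bound, checking that all four exponents $v-1/\al(\cdot)$ are strictly positive so that the monotonicity $x^p\le x^{a-1/\undal}$ on $[0,1]$ applies, and keeping track of the length-$2^{-j-1}$ normalisation that neutralises the $2^j$ prefactor. I expect the same argument to prove Lemmas~\ref{lem:maj:I1} and \ref{lem:maj:I2} word for word, the only difference being that only two, respectively three, power functions are present on the corresponding slices $[u-2\cdot 2^{-(j+1)},u-2^{-(j+1)}]$ and $[u-3\cdot 2^{-(j+1)},u-2\cdot 2^{-(j+1)}]$.
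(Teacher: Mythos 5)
Your proposal is correct and follows essentially the same route as the paper: a crude termwise bound showing each of the four powers is $O\big(2^{-j(a-1/\undal)}\big)$ on the slice (the paper bounds the bases by $7\cdot 2^{-(j+1)}$ and uses the exponent range $[a-1/\undal,\,b-1/\oval]$, while you reduce to $j\ge 1$ so the bases lie in $[0,1]$ and monotonicity applies), after which the interval length $2^{-(j+1)}$ cancels the prefactor $2^j$. Your observation that the refined estimate of Lemma~\ref{lem:useful:2} is not integrable on this boundary slice, and that the same argument handles Lemmas~\ref{lem:maj:I1} and \ref{lem:maj:I2}, matches the paper's treatment.
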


We are now in position to prove Theorem~\ref{theo:main}.

\begin{proof}[Proof of Theorem~\ref{theo:main}]
Let $J\in\N$, $Q\in\N$,  $(u,v)\in I \times [a,b]$ and $\o \in \Omega^*$ be arbitrary and fixed. Using (\ref{theo:main:eq2}), (\ref{eq:def:Ljuv}), (\ref{eq:prop:Abel:XJuv}), the triangular inequality, (\ref{eq:def-wjk}) and (\ref{A:eq:haar}), one gets that
\begin{align}
\label{theo:main:eq5}
& \big |X^{J+Q}(u,v,\o)-X^{J}(u,v,\o)\big| = \Bigg| \sum_{j=J}^{J+Q-1} \mathcal{L}_j(u,v,\o)\Bigg| \nonumber \\
& \le \sum_{j=J}^{J+Q-1} \Big( \sup_{0\le k <2^j} |\tau_{j,k}(\o)| \Big) \Big( |w_{j,[2^ju]}(u,v)| + \sum_{k=0}^{[2^ju]-1} |w_{j,k}(u,v)-w_{j,k+1}(u,v)| \Big) \nonumber \\
& = \sum_{j=J}^{J+Q-1}  \Big( \sup_{0\le k <2^j} |\tau_{j,k}(\o)| \Big) \Bigg( |w_{j,[2^ju]}(u,v)|  \nonumber \\
& \qquad \qquad \qquad \quad + \sum_{k=0}^{[2^ju]-1} \bigg | 2^j \int_{k2^{-j}}^{(k+1/2)2^{-j}} \Big(K_{u,v}(s)-K_{u,v}(s+2^{-j-1})-K_{u,v}(s+2\cdot 2^{-j-1})+K_{u,v}(s+3\cdot2^{-j-1})\Big) ds \bigg|\Bigg) \nonumber \\
& \le  \sum_{j=J}^{J+Q-1} \Big( \sup_{0\le k <2^j} |\tau_{j,k}(\o)| \Big) \bigg ( |w_{j,[2^ju]}(u,v)|  \nonumber \\
& \qquad \qquad \qquad \qquad \qquad +  2^j \int_{0}^{u-2^{-j-1}} \Big|K_{u,v}(s)-K_{u,v}(s+2^{-j-1})-K_{u,v}(s+2\cdot 2^{-j-1})+K_{u,v}(s+3\cdot2^{-j-1})\Big| ds \bigg).
\end{align}
Next, putting together (\ref{theo:main:eq5}), (\ref{def_Kuv}) and Lemmas~\ref{lem:bou-taujk} to \ref{lem:maj:I3}, one obtains, for any fixed $\zeta>1/\undal$, that:
\begin{align}
\label{theo:main:eq6}
& \big |X^{J+Q}(u,v,\o)-X^{J}(u,v,\o)\big| \le C'(\o)  \sum_{j=J}^{J+Q-1} (1+j)^{\zeta} \Big( 2^{-j\min \{ a-\frac{1}{\undal},\rho_{\al}\}} +2^{-j}\int_{0}^{u-4\cdot 2^{-j-1}} (u-s-3\cdot 2^{-j-1})^{a-1/\undal-2}ds \Big) \nonumber \\
& \le C"(\o) \sum_{j=J}^{J+Q-1}  (1+j)^{\zeta} 2^{-j\min \{ a-\frac{1}{\undal},\rho_{\al}\}} \le  C"(\o) \sum_{j=J}^{+\infty} (1+j)^{\zeta} 2^{-j\min \{ a-\frac{1}{\undal},\rho_{\al}\}},
\end{align}
where $C'$ and $C"$ are two positive and finite random variables not depending on $J$, $Q$ and $(u,v)$. Thus, one can derive from (\ref{theo:main:eq6}) that $(X^{J}(\cdot,\cdot,\o))_{J\in\N}$ is a Cauchy sequence in the Banach space $\ce(I\times [a,b])$; its limit in this space is denoted by $\widetilde{X}(\cdot,\cdot,\o)$.

Let us now prove that (\ref{theo:main:eq4}) is satisfied. When $Q$ goes to $+\infty$, it follows from (\ref{theo:main:eq6}) that
\begin{align}
\label{theo:main:eq7}
& \big |\widetilde{X}(u,v,\o)-X^{J}(u,v,\o)\big|\le C"(\o) \sum_{j=J}^{+\infty} (1+j)^{\zeta} 2^{-j\min \{ a-\frac{1}{\undal},\rho_{\al}\}} \nonumber\\
&\le  C"(\o)(1+J)^{\zeta} 2^{-J\min \{ a-\frac{1}{\undal},\rho_{\al}\}}  \sum_{j=0}^{+\infty} (1+j)^{\zeta} 2^{-j\min \{ a-\frac{1}{\undal},\rho_{\al}\}}  \le C'''(\o) \, J^{\zeta} 2^{-J\min \{ a-\frac{1}{\undal},\rho_{\al}\}},
\end{align}
where $C'''$ is a positive and finite random variable not depending on $J$ and $(u,v)$. Thus, (\ref{theo:main:eq7}) implies that (\ref{theo:main:eq4}) holds.
\end{proof}

\appendix
\section{Appendix}\label{app}

\begin{proof}[Proof of Lemma~\ref{lem:useful:2}]
One assumes that $j\in\N$ and $(u,v)\in I\times [a,b]$ are arbitrary and such that $u\ge 4.2^{-j-1}$. Then, one denotes by $L_{u,v}$ the infinitely differentiable function on the open set $(-\infty, u)\times (1/v,+\infty)\subset\R^2$ defined as:
\begin{equation}
\label{lem:useful:2:eq0}
L_{u,v}(x,y):=(u-x)^{v-1/y}\,,\quad\mbox{for all $(x,y)\in (-\infty, u)\times (1/v,+\infty)$.}
\end{equation}
Thus, using (\ref{def_Kuv}), one has
\begin{equation}
\label{lem:useful:2:eq1}
K_{u,v}(z)=L_{u,v}\big(z,\al(z)\big)\, \quad\mbox{for all $z\in [0, u-2^{-j-1}]$.}
\end{equation}
One can derive from (\ref{lem:useful:2:eq1}) and the triangular inequality that, for every $s\in [0,u-4.2^{-j-1}]$,
\begin{align} \label{maj:ABjuv}
& \left| K_{u,v}(s)-K_{u,v}(s+2^{-j-1})-K_{u,v}(s+2.2^{-j-1})+K_{u,v}(s+3.2^{-j-1}) \right| \nonumber \\
& = \Big | L_{u,v}(s,\al(s))-L_{u,v}(s+2^{-j-1},\al(s+2^{-j-1})) \nonumber \\
& \qquad \qquad \qquad -L_{u,v}(s+2.2^{-j-1},\al(s+2.2^{-j-1}))+ L_{u,v}(s+3.2^{-j-1},\al(s+3.2^{-j-1})) \Big | \nonumber \\
& \le A_{u,v}^j(s)+B_{u,v}^j(s),
\end{align}
where
\begin{align}\label{dej:ajuv}
 A_{u,v}^j(s)& := \Big | L_{u,v}(s,\al(s))-L_{u,v}(s+2^{-j-1},\al(s+2^{-j-1}))\nonumber \\
& \qquad \qquad - L_{u,v}(s+2.2^{-j-1},\al(s))+ L_{u,v}(s+3.2^{-j-1},\al(s+2^{-j-1})) \Big |
\end{align}
and
\begin{align}\label{def:bjuv}
B_{u,v}^j(s) & := \Big | L_{u,v}(s+2.2^{-j-1},\al(s))-L_{u,v}(s+3.2^{-j-1},\al(s+2^{-j-1})) \nonumber \\
& \qquad \qquad - L_{u,v}(s+2.2^{-j-1},\al(s+2.2^{-j-1}))+ L_{u,v}(s+3.2^{-j-1},\al(s+3.2^{-j-1})) \Big |.
\end{align}

\textbf{First step:} The goal of this step is to provide a suitable upper bound for the quantity $A_{u,v}^j(s)$. \\
For any fixed $s\in [0,u-4.2^{-j-1}]$, one denotes by $g_{1,s}$ the infinitely differentiable function defined as: 
\begin{equation}
\label{lem:useful:2:eq2}
    g_{1,s}\colon\begin{cases}
    [0,2^{-j}]\longrightarrow\mathbb{R}\\
    x\longmapsto L_{u,v}(s+x,\al(s))-L_{u,v}(s+2^{-j-1}+x,\al(s+2^{-j-1})).
    \end{cases}
\end{equation}
Thus, it follows from (\ref{dej:ajuv}) that $A_{u,v}^j(s) = \big |g_{1,s}(2^{-j})-g_{1,s}(0)\big|$. Then using the mean value theorem, one obtains that $A_{u,v}^j(s) =2^{-j} |g_{1,s}'(x_*)|$, for some $x_*\in (0,2^{-j-1})$. Therefore, one can derive from (\ref{lem:useful:2:eq2}), (\ref{lem:useful:2:eq0}), the triangular inequality and the inequalities 
$|v-1/\al(s+2^{-j-1}) |<1$ and $ |v-1/\al(s) |<1$ that 
\begin{align}
\label{lem:useful:2:eq3}
& A_{u,v}^j(s) \le  2^{-j} \bigg( \Big |(u-s-2^{-j-1}-x_*)^{v-1-1/\al(s+2^{-j-1})}-(u-s-2^{-j-1}-x_*)^{v-1-1/\al(s)}\Big | \\
& \quad  +\Big |\frac{1}{\al (s+2^{-j-1})}-\frac{1}{\al (s)}\Big |(u-s-2^{-j-1}-x_*)^{v-1-1/\al(s)}+ \Big |(u-s-2^{-j-1}-x_*)^{v-1-1/\al(s)}-(u-s-x_*)^{v-1-1/\al(s)}\Big |\bigg)\, .\nonumber
\end{align}
Next, notice that it follows from the assumption $\al\in\ce^{1+\rho_\al}(I)$, (\ref{eq:condal}), $x_*\in (0,2^{-j-1})$ and $v\in [a,b]$, that 
\begin{eqnarray}
\label{lem:useful:2:eq4}
\Big |\frac{1}{\al (s+2^{-j-1})}-\frac{1}{\al (s)}\Big |(u-s-2^{-j-1}-x_*)^{v-1-1/\al(s)} & \le & c_1 2^{-j-1}(u-s-2^{-j-1}-x_*)^{v-1-1/\al(s)} \nonumber\\
&\le & c_1 2^{-j-1}(u-s-3.2^{-j-1})^{a-1-1/\undal}\,,
\end{eqnarray}
where $c_1$ is a constant not depending on $j,u,s,v$. Thus, using (\ref{lem:useful:2:eq3}) and (\ref{lem:useful:2:eq4}), and applying the mean value theorem to the functions:

\begin{equation*}
    g_{2,s,x_*}\colon\begin{cases}
    [0,2^{-j-1}]\rightarrow\mathbb{R}\\
    w\longmapsto (u-s-x_*-w)^{v-1-\frac{1}{\al(s)}}
    \end{cases}
\end{equation*}

\begin{equation*}
    g_{3,s,x_*}\colon\begin{cases}
    [\alpha(s)\wedge \alpha(s+2^{-j-1}),\alpha(s)\vee \alpha(s+2^{-j-1})]\rightarrow\mathbb{R}\\
    z\longmapsto (u-s-2^{-j-1}-x_*)^{v-1-\frac{1}{z}},
    \end{cases}
\end{equation*}

one obtains that 
\begin{align}
\label{lem:useful:2:eq5}
& A_{u,v}^j(s) \le c_2 2^{-2j} \Big( (u-s-3.2^{-j-1})^{a-1-1/\undal} +(u-s-3.2^{-j-1})^{a-2-1/\undal} \nonumber \\
& \qquad \qquad \qquad \qquad \qquad \qquad \qquad + |\log (u-s-2^{-j-1})| (u-s-3.2^{-j-1})^{a-1-1/\undal} \Big) \nonumber \\
& \le c_2 2^{-2j} \Big(2(u-s-3.2^{-j-1})^{a-2-1/\undal} + |\log (u-s-2^{-j-1})| (u-s-3.2^{-j-1})^{a-1-1/\undal} \Big)\,, 
\end{align}
where $c_2$ is a constant not depending on $j,u,s,v$. Finally, combining (\ref{lem:useful:2:eq5}) and the inequality 
$ |\log(x)|\le x^{-1}$, for all $x\in (0,1]$, one gets that
\begin{equation}\label{maj:Ajuv}
A_{u,v}^j(s) \le c_3 2^{-2j} (u-s-3.2^{-j-1})^{a-2-1/\undal}\, ,
\end{equation}
where $c_3$ is a constant not depending on $j,u,s,v$. 

\textbf{Second step:} The goal of this step is to provide a suitable upper bound for the quantity $B_{u,v}^j(s)$. \\
In view of (\ref{def:bjuv}), the quantity  $B_{u,v}^j(s)$ can be rewritten as:
\begin{align*}
B_{u,v}^j(s)& = \Big |\big (L_{u,v}(s+2.2^{-j-1},\al(s))- L_{u,v}(s+2.2^{-j-1},\al(s+2.2^{-j-1}))\big) \nonumber \\
& \qquad \qquad -\big(L_{u,v}(s+3.2^{-j-1},\al(s+2^{-j-1}))- L_{u,v}(s+3.2^{-j-1},\al(s+3.2^{-j-1}))\big) \Big |.
\end{align*}
Thus applying the mean value theorem to the functions

\begin{equation*}
    g_{4,s}\colon\begin{cases}
    [\alpha(s)\wedge \alpha(s+2.2^{-j-1}),\alpha(s)\vee \alpha(s+2.2^{-j-1})]\longrightarrow\mathbb{R}\\
    y\longmapsto L_{u,v}(s+2.2^{-j-1},y),
    \end{cases}
\end{equation*}
 \begin{equation*}
    g_{5,s}\colon\begin{cases}
    [\alpha(s+2^{-j-1})\wedge \alpha(s+3.2^{-j-1}),\alpha(s+2^{-j-1})\vee \alpha(s+3.2^{-j-1})]\longrightarrow\mathbb{R}\\
    y\longmapsto L_{u,v}(s+3.2^{-j-1},y).
    \end{cases}
\end{equation*}
and putting together the triangular inequality, the assumption $\al\in\ce^{1+\rho_{\al}}(I)$, (\ref{eq:condal}), $v\in [a,b]$, and the equality 
\[
df-gh=d(f-h)+h(d-g)\,, \quad\mbox{for all $d,f,g,h\in\R$,}
\]
one obtains, for some
\begin{equation}
\label{lem:useful:2:eq6}
y_*\in  \left(\alpha(s)\wedge \alpha(s+2.2^{-j-1}),\alpha(s)\vee \alpha(s+2.2^{-j-1})\right)
\end{equation}
 and 
 \begin{equation}
\label{lem:useful:2:eq7}
 y_{**}\in \left(\alpha(s+2^{-j-1})\wedge \alpha(s+3.2^{-j-1}),\alpha(s+2^{-j-1})\vee \alpha(s+3.2^{-j-1})\right),
\end{equation}
that 
\begin{align}
\label{lem:useful:2:eq8}
B_{u,v}^j(s) & \le c_4  2^{-j(1+\rho_{\al})} \Big|\frac{1}{y_{*}^2}\log(u-s-2.2^{-j-1})\Big |(u-s-2.2^{-j-1})^{a-1/\undal}  \nonumber \\
& + c_4 2^{-j} \Big | \big (\frac{1}{y_*^2}-\frac{1}{y_{**}^2} \big) \log(u-s-2.2^{-j-1})\Big| (u-s-2.2^{-j-1})^{a-1/\undal} \nonumber \\
& + c_4 2^{-j} \left| \frac{1}{y_{**}^2}\log(u-s-2.2^{-j-1})\right| \left| (u-s-2.2^{-j-1})^{v-1/y_*} - (u-s-2.2^{-j-1})^{v-1/y_{**}} \right| \nonumber \\
& + c_4 2^{-j} \left| \frac{1}{y_{**}^2} \left( (u-s-2.2^{-j-1})^{v-1/y_{**}}\log(u-s-2.2^{-j-1})-(u-s-3.2^{-j-1})^{v-1/y_{**}}\log(u-s-3.2^{-j-1})
 \right) \right|\,,
\end{align}
where $c_4$ is a constant not depending on $j,u,s,v, y_*, y_{**}$\,. Next, notice that using (\ref{eq:condal}), (\ref{lem:useful:2:eq6}), (\ref{lem:useful:2:eq7}) and the assumption $\al\in\ce^{1+\rho_\al}(I)$, one gets that 
\begin{equation}
\label{lem:useful:2:eq8bis}
\max\Big\{\frac{1}{y_*^2},\frac{1}{y_{**}^2} \Big \}<1
\end{equation}
and
\begin{equation}
\label{lem:useful:2:eq9}
\Big |\frac{1}{y_*^2}-\frac{1}{y_{**}^2} \Big |=\Big |\frac{y_{**}^2-y_{*}^2}{y_{**}^2y_{*}^2}\Big |\le 4\,|y_{**}-y_*|\le 4\,\big |\max_{0\le i \le 3} \al(s+i2^{-j-1})-\min_{0\le i \le 3} \al(s+i2^{-j-1})\big|
 \le c_5 2^{-j} ,
\end{equation}
where $c_5$ is a constant not depending on $j,u,s,v, y_*, y_{**}$\,. Also, notice that applying the mean value theorem to the function 
\begin{equation*}
    g_{6,s}\colon\begin{cases}
    [y_*\wedge  y_{**},y_*\vee  y_{**}]\longrightarrow\mathbb{R}\\
    x\longmapsto  (u-s-2.2^{-j-1})^{v-1/x}
    \end{cases}
\end{equation*}
and using $v\in[a,b]$, $y_*,y_{**}\in (\undal,\oval)$, the second and the third inequality in (\ref{lem:useful:2:eq9}), one obtains that
\begin{equation}
\label{lem:useful:2:eq10}
\left| (u-s-2.2^{-j-1})^{v-1/y_*} - (u-s-2.2^{-j-1})^{v-1/y_{**}} \right|\le c_6 2^{-j}(u-s-2.2^{-j-1})^{a-1/\undal} \big |\log(u-s-2.2^{-j-1})\big|\, ,
\end{equation}
where $c_6$ is a constant not depending on $j,u,s,v, y_*, y_{**}$\,. Moreover, notice that applying the mean value theorem to the function 
\begin{equation*}
    g_{7,s,y_{**}}\colon\begin{cases}
    [0,2^{-j-1}]\longrightarrow\mathbb{R}\\
    x \longmapsto (u-s-2.2^{-j-1}-x)^{v-1/y_{**}}\log(u-s-2.2^{-j-1}-x)
    \end{cases}
\end{equation*}
and making use of $v\in[a,b]$ and $y_*,y_{**}\in (\undal,\oval)$,
it follows that 
\begin{eqnarray}
\label{lem:useful:2:eq11}
&& \left| (u-s-2.2^{-j-1})^{v-1/y_{**}}\log(u-s-2.2^{-j-1})-(u-s-3.2^{-j-1})^{v-1/y_{**}}\log(u-s-3.2^{-j-1})\right|\nonumber\\
&& \le c_7 2^{-j} (u-s-3.2^{-j-1})^{a-1/\undal-1}\Big (1+\log(u-s-3.2^{-j-1})\Big)\,,
\end{eqnarray}
where $c_7$ is a constant not depending on $j,u,s,v, y_*, y_{**}$\,.
%
Next putting together (\ref{lem:useful:2:eq8}) to (\ref{lem:useful:2:eq11}), one gets that
\begin{align}
\label{lem:useful:2:eq12}
B_{u,v}^j(s) & \le c_4  2^{-j(1+\rho_{\al})} \big|\log(u-s-2.2^{-j-1})\big |(u-s-2.2^{-j-1})^{a-1/\undal} \nonumber \\
& \qquad + c_8 2^{-2j} \bigg ( (u-s-2.2^{-j-1})^{a-1/\undal}\big |\log(u-s-2.2^{-j-1})\big | \nonumber \\
& \qquad \qquad +  (u-s-2.2^{-j-1})^{a-1/\undal} \log^2(u-s-2.2^{-j-1}) \nonumber \\
& \qquad \qquad +(u-s-3.2^{-j-1})^{a-1/\undal-1}\Big (1+\log(u-s-3.2^{-j-1})\Big) \bigg)\,, 
\end{align}
where $c_8$ is a constant not depending on $j,u,s,v, y_*, y_{**}$\,. Then, one can derive from (\ref{lem:useful:2:eq12}) and the inequalities $\sup_{z\in(0,1]} |\log(z)| z^{a-1/\undal} <+\infty$, $\sup_{z\in(0,1]} |\log^2(z)| z^{a-1/\undal} <+\infty$ and $ |\log(x)|\le |x|^{-1}$, for all $x\in (0,1]$, that
\begin{equation}\label{maj:Bjuv}
B_{u,v}^j(s) \le c_9 \big ( 2^{-j(1+\rho_{\al})} + 2^{-2j} |u-s-3.2^{-j-1}|^{a-1/\undal-2}\big)\,,
\end{equation}
where $c_9$ is a constant not depending on $j,u,s,v$\,. Finally combining (\ref{maj:ABjuv}), (\ref{maj:Ajuv}) and (\ref{maj:Bjuv}), one obtains (\ref{lem:useful:2:eq13}).
\end{proof}

\begin{proof}[Proof of Lemma~\ref{prop1:maj:wjk}] It easily follows from (\ref{eq:def-wjk}), (\ref{A:eq:haar}), (\ref{def_Kuv}), (\ref{eq:condal}) and the assumption that $v\in [a,b]$ that 
\[
\left| w_{j,[2^ju]}(u,v) \right| \le 2^{j} \int_{2^{-j}[2^ju]}^{u} (u-s)^{v-\frac{1}{\alpha(s)}} ds\le  2^{j} \int_{2^{-j}[2^ju]}^{u} 2^{-j (a-1/\undal)} ds\le 2^{-j (a-1/\undal)}. 
\]
\end{proof}

%

The proofs of Lemmas~\ref{lem:maj:I1}, \ref{lem:maj:I2} and \ref{lem:maj:I3} are very similar so we only give that of Lemma~\ref{lem:maj:I3}.

\begin{proof}[Proof of Lemma~\ref{lem:maj:I3}] 
Let $j\in\Z_+$ and $(u,v)\in I\times [a,b]$ be arbitrary and such that $u> 3\cdot 2^{-(j+1)}$. In view of the assumptions on $(u,v)$ and (\ref{eq:condal}), it can easily be seen that, for all $s\in [u-4 \cdot 2^{-(j+1)}, u-3 \cdot 2^{-(j+1)}]$
and for any $q\in \{0,1,2,3\}$, one has
\[
0\le \big (u-s-q\cdot 2^{-(j+1)}\big)^{v-\frac{1}{\al(s+q\cdot 2^{-j-1})}}\le \big (7\cdot 2^{-(j+1)}\big)^{v-\frac{1}{\al(s+q\cdot 2^{-j-1})}} \le 7^{b-1/\oval}\cdot 2^{-(j+1)(a-1/\undal)}\,.
\]
Thus, using the triangular inequality one gets that
\begin{align*}
I_j^3(u,v)& :=2^{j} \int_{u-4 \cdot 2^{-(j+1)}}^{u-3\cdot 2^{-(j+1)}} \Big |(u-s)^{v-\frac{1}{\al(s)}} - \big (u-s-2^{-(j+1)}\big)^{v-\frac{1}{\al(s+2^{-j-1})}}\nonumber \\
& \qquad\qquad \qquad\qquad\qquad- \big (u-s-2\cdot 2^{-(j+1)}\big)^{v-\frac{1}{\al(s+2\cdot 2^{-j-1})}} + \big (u-s-3\cdot 2^{-(j+1)}\big)^{v-\frac{1}{\al(s+3\cdot 2^{-j-1})}} \Big | ds \nonumber \\
&\le 2^{j} \int_{u-4 \cdot 2^{-(j+1)}}^{u-3\cdot 2^{-(j+1)}}  4\cdot7^{b-1/\oval}\cdot 2^{-(j+1)(a-1/\undal)} ds
= 2^{-1}\cdot 4\cdot7^{b-1/\oval}\cdot 2^{-(j+1)(a-1/\undal)}\,,
\end{align*}  
which shows that (\ref{result:lem:maj:I3}) is satisfied.

\end{proof}

\section*{Acknowledgements}
This work has been partially supported by the Labex CEMPI (ANR-11-LABX-0007-01) and the GDR 3475 (Analyse Multifractale).

\end{document}